%%%%%%%%%%%%%%%%%%%%%%% file template.tex %%%%%%%%%%%%%%%%%%%%%%%%%
%
% This is a general template file for the LaTeX package SVJour3
% for Springer journals.          Springer Heidelberg 2010/09/16
%
% Copy it to a new file with a new name and use it as the basis
% for your article. Delete % signs as needed.
%
% This template includes a few options for different layouts and
% content for various journals. Please consult a previous issue of
% your journal as needed.
%
%%%%%%%%%%%%%%%%%%%%%%%%%%%%%%%%%%%%%%%%%%%%%%%%%%%%%%%%%%%%%%%%%%%
%
% First comes an example EPS file -- just ignore it and
% proceed on the \documentclass line
% your LaTeX will extract the file if required
% [arxiv_v2: filecontents example.eps stripped, 188 chars]
\RequirePackage{fix-cm}
\documentclass[smallextended]{svjour3}       % onecolumn (second format)
\smartqed  % flush right qed marks, e.g. at end of proof
\usepackage{graphicx}
\usepackage{amsmath,dsfont,mathrsfs,enumerate}
\usepackage{mathtools}
\usepackage{bbm}

\newcommand{\R}{\mathbb{R}}

\newcommand{\EE}{\mathbb{E}}
\newcommand{\G}{\mathcal{G}}
\newcommand{\F}{\mathcal{F}}

\newcommand{\1}{\mathbbm{1}}

\newtheorem{assumption}{Assumption}
\usepackage{amssymb}

\usepackage[author year]{natbib}  

%
% \usepackage{mathptmx}      % use Times fonts if available on your TeX system
%
% insert here the call for the packages your document requires
%\usepackage{latexsym}
% etc.
%
% please place your own definitions here and don't use \def but
% \newcommand{}{}
%
% Insert the name of "your journal" with
% \journalname{myjournal}
%
\begin{document}

\title{The randomly distorted Choquet integrals with respect to a $\G$-randomly distorted capacity and risk measures %\thanks{Grants or other notes
%about the article that should go on the front page should be
%placed here. General acknowledgments should be placed at the end of the article.}
}
%\subtitle{Do you have a subtitle?\\ If so, write it here}

\titlerunning{The randomly distorted Choquet integrals and risk measures}        % if too long for running head

\author{Ohood Aldalbahi         \and
        Miryana Grigorova %etc.
}

%\authorrunning{Short form of author list} % if too long for running head

\institute{O. Aldalbahi\\ Department of Statistics, University of Warwick \at
              CV4 7AL, Coventry, UK\\              \email{ohood.aldalbahi@warwick.ac.uk}           %  \\
%             \emph{Present address:} of F. Author  %  if needed
           \and
           M. Grigorova\\ Department of Statistics, University of Warwick \at
              CV4 7AL, Coventry, UK\\ 
              \email{miryana.grigorova@warwick.ac.uk} 
                \\
                https://orcid.org/0000-0002-6933-9286\\
              Corresponding Author           
}

\date{Received: date / Accepted: date}
% The correct dates will be entered by the editor

\maketitle

\begin{abstract}
We study randomly distorted Choquet integrals with respect to a 
capacity $c$ on a measurable space $(\Omega,\mathcal F)$, where the capacity $c$ is distorted by a $\G$-measurable random distortion function (with  $\G$  a sub-$\sigma$-algebra of $\F$). 
We establish some fundamental properties, including the comonotonic additivity of these integrals under suitable assumptions on the underlying capacity space. 
We provide a representation result for comonotonic additive conditional risk measures which are monotone with respect to the first-order stochastic dominance relation (with respect to the capacity $c$) in terms of these randomly distorted Choquet integrals.
We also present the case where the random distortion functions are concave.
In this case, the $\G$-randomly distorted Choquet integrals are characterised in terms of comonotonic  additive conditional risk measures which are monotone with respect to the stop-loss stochastic dominance relation (with respect to the capacity $c$).
We provide  examples, extending some well-known risk measures in finance and insurance, such as the Value at Risk and the Average Value at Risk.

\keywords{random distortion function \and risk measure \and capacity \and Choquet integral \and conditional risk measure \and  stochastic dominance with respect to a capacity.}
% \PACS{PACS code1 \and PACS code2 \and more}
%\subclass{MSC code1 \and MSC code2 \and more}
\end{abstract}

\section{Introduction}
Choquet capacities and Choquet integrals have been introduced by \cite{choquet1954theory} as an extension of the notion of  probability and the Lebesgue integral to the non-additive case, while preserving the monotonicity. 
Capacities (in the modern terminology) often (but not always) preserve the monotone sequential continuity as well.
These tools have attracted considerable interest in various fields: in the probability and stochastic analysis community (cf., e.g., \cite{SPS_1971__5__77_0}), 
in fuzzy theory (cf., e.g., \cite{denneberg1994non}, \cite{Grab}, \cite{Grab1}), 
in finance (cf., e.g., \cite{cherny2009new}, \cite{chateauneuf1994modeling})
in actuarial science  and in economics (cf., e.g., \cite{gilboa1987expected}, \cite{schmeidler1989subjective}, \cite{chateauneuf2000optimal}, \cite{FöllmerSchied+2016}), in machine learning (cf., e.g., \cite{ML}), etc.
A particular case of Choquet capacity, known as distorted probability, and the corresponding Choquet integral, known as distorted integral, hold a prominent place in decision theory as alternatives to the classical expected utility of Von Neumann and Morgenstern (cf., e.g., \cite{yaari1987dual}, \cite{QUIGGIN1982323}).
At the interface of mathematical finance and actuarial mathematics (possibly, up to a minus sign, depending on the sign convention), the Choquet integrals with respect to a distorted probability are known as (static) distortion risk measures or distortion premium principles.
It is by now well-known that a number of risk measures used extensively by practitioners (and recommended by regulatory bodies) are of this form, such as the Value at Risk and the Average Value at Risk. 
Distorted probabilities and distorted integrals have recently attracted renewed interest, especially in the conditional probability framework: 
cf., e.g., \cite{vega2021conditional} (for convex conditional risk measures),
\cite{de2023conditional} (for conditional quantiles), 
\cite{zang2024random} (for random distortion risk measures), 
and \cite{de2023convex} (for further actuarial applications). 

The field of stochastic orderings induced by or related to non-linear integrals has also been an active field of research in the recent years with various applications: cf., e.g., \cite{grigorova2014stochastic}, \cite{grigorova2014risk} and \cite{GRIGOROVA201373} for stochastic orderings with respect to a capacity, 
\cite{SelLy2022NicolasPrivault} and \cite{SelLy2021NicolasStochasticordering} for stochastic orderings induced by non-linear $g$-evaluations and by non-linear (sub-additive) $G$-expectations, \cite{song2006representations} and \cite{song2009risk} for distortion risk measures (and suprema thereof) with respect to a probability and classical stochastic dominance relations.

In the present paper, we place ourselves in the capacity framework where $(\Omega, \F, c)$ is an underlying capacity space. 
In  economic, financial, and actuarial applications, the capacity $c$ typically captures ambiguity on the model (where there might not even exist a reference probability or a reference set of probabilities). 
We study monotone comonotonic additive risk operators $\rho$ from $\chi(\F)$ to $\chi(\G)$, where $\G$ is a sub-$\sigma$-algebra of $\F$ and $\chi(\F)$ (resp. $\chi(\G)$) denotes the space of bounded $\F$-(resp. $\G$-)measurable functions. 
Our operators are also monotone for  the increasing or for the stop-loss stochastic ordering relations with respect to the capacity $c$.
We extend the (static) Choquet integrals with respect to a distorted capacity to the conditional setting, by "randomising" the distortion function which now depends also on $\omega \in \Omega$.  
Such random distortion functions (possible with slight variants in the definition) have appeared in the work  \cite{cherny2009new}, and, more recently,  in \cite{vega2021conditional} and \cite{zang2024random} for the case where $c$ is a probability measure.
We study the properties of the $\G$-randomly distorted Choquet integrals as operators from $\chi(\F)$ to $\chi(\G)$ and we show that they are comonotonic additive. 
In the case of a (general) $\G$-random distortion function $\phi^{\G}$, we characterise these integrals in terms of comonotonic additive conditional risk measures which are monotone with respect to the first-order stochastic dominance (with respect to the capacity $c$).
In the case where the random distortion function $\phi^{\G}$ is moreover concave, we characterise them in terms of comonotonic additive conditional risk measures which are monotone with respect to the stop-loss stochastic dominance (with respect to the capacity $c$).
For related results in the case where $c$ is a probability measure, we refer to \cite{vega2021conditional}.
We provide examples, inspired from actuarial science and finance, which can be seen as  \textit{randomised} versions of well-known risk measures such as the  Value at Risk (VaR) and the Average Value at Risk (AVaR) in the probabilistic or, more generally, in the capacity framework. 

The remainder of the paper is organised as follows: 
In Section \ref{sec_Preliminaries}, we provide some preliminaries on the notion of  capacity, stochastic orderings with respect to a capacity, and quantile functions with respect to a capacity, which are essential for the sequel. 
In Section \ref{sec_rand_dis_Choq}, we introduce the Choquet integrals with respect to a $\G$-randomly distorted capacity and show their main properties, among which the comonotonic additivity, under  suitable assumptions on the capacity space. 
Section \ref{sec_representation_first_order} is dedicated to our first representation result for comonotonic additive conditional risk measures which are monotone with respect to the first-order stochastic dominance (with respect to the capacity $c$). 
Section \ref{sec_representation_stop_loss} is devoted to the case where the random distortion $\phi^{\G}$ is concave and to our second representation result for comonotonic additive conditional risk measures which are monotone with respect to the stop-loss stochastic ordering (with respect to the capacity $c$).
Section \ref{sec_Examples} contains the examples in the probabilistic framework and in the capacity framework. The Appendix \ref{sec_Appendix} contains  the  proofs of some technical lemmas.

\section{Preliminaries}\label{sec_Preliminaries}
 
    Throughout this work, $ \left(\Omega, \F \right) $ will be a given measurable space and $\G \subseteq \F$ will be a sub-$\sigma$-algebra. 
    We denote by  $\chi(\F)$ the set of all bounded, real-valued, $\F$-measurable functions on  $\Omega$. 
 Similarly, we denote by $\chi(\G)$
 the set of all bounded, real-valued, $\G$-measurable functions on  $\Omega$. The supremum norm is defined by $||X||:=\sup_{\omega \in \Omega}|X(\omega)|$.
 For $X$ and $Y$ in $\chi$, we write $X=Y$ (resp. $X\leq Y$), when    
 $X(\omega)=Y(\omega)$ (resp. $X(\omega)\leq Y(\omega)$), for all $\omega\in\Omega$.

\subsection{Conditional risk measure and comonotonic additivity}
We start with a general  definition of conditional risk measure, which takes as input an element of $\chi(\F)$, interpreted as a random loss, and returns an element in $\chi(\G)$. 
 \begin{definition}[Conditional risk measure] 
   Any operator  $\rho:\chi(\F) \rightarrow  \chi(\G)$ is called a $\G$-conditional risk measure or just conditional risk measure (if there is no confusion about the $\sigma$-algebra $\G$ to which we refer).
\end{definition}
Let us recall the definition of comonotonic measurable functions, especially popular in the risk measurement literature. 

\begin{definition} \label{defn4.2}
   Let  $X$ and $Y$ be two real-valued measurable functions on $ \left(\Omega,\mathcal{F}\right)$. We say that $X$ and $Y$ are comonotonic %(or that the random vector $(X,Y)$ is comonotonic) 
   if for any pair $( \omega , \Bar{\omega} ) \in \Omega \times \Omega$,
     $   \left( X(\omega ) - X(\Bar{\omega} ) \right) 
        \left(Y(\omega ) - Y(\Bar{\omega} ) \right)  
    \geq 0$.
\end{definition}
\begin{definition}[Comonotonic additivity] \label{defn1}
     Let $\rho: \chi(\F) \rightarrow  \chi(\G)$. We say that $\rho$ is comonotonic additive if $\rho(X + Y)(\omega) =  \rho(X)(\omega) + \rho(Y)(\omega) $, for all $\omega \in \Omega$,
        for all $X\in \chi(\F)$ and $Y \in \chi(\F)$ which are comonotonic. 
\end{definition}
\begin{remark}\label{remark_30}
If $\rho$ is comonotonic additive, %$1$-
Lipschitz continuous (with respect to the supremum norm $||\cdot||$) and normalised (that is, $\rho(1)=1$), then $\rho$ satisfies the positive homogeneity property (that is, for any $a\geq 0$, for any $X\in\chi(\F)$, for any $\omega\in\Omega$,  $\rho(aX) (\omega)=a\rho(X)(\omega)$). Indeed, let $X \in \chi(\F)$. We can show by induction that, due to the comonotonic additivity of $\rho$, for any integer $n\geq 1$,
$ \rho(nX)(\omega)=  
n\rho(X)(\omega)$. 
    Let $\beta$ be in $\mathbb{Q} \cap [0,\infty)$, then $\beta$ can be written as $\beta = \frac{n}{m}$, where $n$ and  $m$ are positive integers. By using the property for positive integers, we have: 
 $ n\rho(X)(\omega)= \rho(nX)(\omega)=   \rho(m \frac{n}{m} X)(\omega)= m  \rho(\frac{n}{m}X)(\omega) $,
   which gives $ \rho(\frac{n}{m} X)(\omega)=  \frac{n}{m} \rho(X)(\omega)$.
     For any $a \geq 0$, there exists a sequence of rational numbers $\beta_n  \in \mathbb{Q} \cap [0,\infty)$ such that $\lim_{n \rightarrow \infty} \beta_n = a$.
    We have: $\lim_{n \rightarrow \infty} \beta_nX(\omega) = aX(\omega)$ for all $\omega \in \Omega$, and  the convergence is also in sup-norm as $ 
     \lim_{n \rightarrow \infty} ||(\beta_n - a)X||= \lim_{n \rightarrow \infty} |\beta_n - a| \, ||X|| =0$. 
    As $\rho$ is Lipschitz continuous, we have:  
     $ \rho(aX)  = \lim_{n \rightarrow \infty} \rho(\beta_n X)$ in sup-norm,
 which implies the pointwise convergence 
 $ \rho(aX)(\omega)  = \lim_{n \rightarrow \infty} \rho(\beta_n X) (\omega)$. 
  Hence,
    \begin{equation*}
  \rho(aX) (\omega)
  = \lim_{n \rightarrow \infty} \rho(\beta_n X) (\omega)
   =   \lim_{n \rightarrow \infty} \beta_n \rho(X) (\omega)
  = a \rho (X)(\omega),   \,\, \text{for all} \,\,  \omega \in \Omega.
    \end{equation*}
\end{remark}

\subsection{Capacities, quantile functions with respect to capacities, and stochastic orderings}
In this subsection, we recall some definitions and properties of capacities, which can be found in \cite{denneberg1994non} and/or \cite{FöllmerSchied+2016}. 
 Moreover, we recall some results related to the first-order stochastic dominance with respect to a capacity, which is an extension of the classical first-order stochastic dominance (with respect to a probability measure). For more results on stochastic orderings with respect to a capacity, we refer to 
 \cite{grigorova2014risk} and \cite{grigorova2014stochastic}.

 \begin{definition}
 Let $ (\Omega,\F)$ be a measurable space.
     A set function $ c: \F \rightarrow [0,1] $ is called \emph{a capacity} if it satisfies $c \left( \emptyset \right ) = 0 $   (groundedness), $c \left( \Omega \right ) = 1 $ (normalisation) and the 
     monotonicity property: if $A,B \in \F$ are such that $A \subset B $, then $ c(A) \leq  c (B)$.
 \end{definition}

 \begin{definition}  \label{def_continuous_B_C}
 A capacity $c$ is called\emph{ continuous from below} if for any sequence of events 
   $ (A_n) \subset \F^\mathbb N$  such that $A_n \subset A_{n+1}$, for all $ n \in \mathbb{N}$, we have  $\lim_{n \rightarrow \infty} c(A_n) = c(\cup_{n=1}^{\infty} A_n)$.
   %\\ The dual capacity $\Bar{c}$ of a given capacity $c$ is defined by: 
   %$\Bar{c}(A):=1-c(A^c)$, for all $A \in \F$. 
\end{definition}

\begin{remark}Let $\mathcal{P}$ be a non-empty set of probability measures. 
    Then, $c(\cdot):=\sup_{P \in \mathcal{P}} P (\cdot)$ is a capacity which is continuous from below. Note, however, that the non-linear functional $\sup_{P \in \mathcal{P}} \EE_P(\cdot)$ is in general different from the Choquet integral $\EE_c(\cdot)$. 
    
    \end{remark}

 We recall the notion of (non-decreasing) distribution function with respect to a capacity (cf., e.g., Definition 4.85 in \cite{FöllmerSchied+2016}).
\begin{definition}  \label{def_CDF_G}
  Let $c$ be a given capacity.  
  For a real-valued measurable function $X$, \emph{the distribution function} $G_X$ \textit{of} $X$\textit{ with respect to} $c$ is defined by
 $ G_{X,c}\left( x\right) = G_X\left( x\right) := 1 - c \left( X > x \right)$, for all $x \in \mathbb{R}$.
\end{definition}

We extend the above definition by setting $G_X(+\infty) := 1$  and $G_X(-\infty) := 0$.

The following assumption is imposed in some results in the sequel.

\begin{assumption} \label{assumption_Uniform_capacity}
    Let $c$ be a capacity. 
    We assume that there exists a real-valued measurable function $Z$ with distribution function with respect to the capacity $c$, $G_{Z}$, which is continuous and satisfies: 
\[
 \lim_{x\rightarrow -\infty} G_{Z} (x) =0  \,\,\,\,   and \,\,\,\,  \lim_{x\rightarrow +\infty} G_{Z} (x) =1.
\]
\end{assumption}

The following lemma can be found in \cite{grigorova2014risk}.
% (cf. Lemma3.8)  % %{grigorova:hal-2011} 
It is well-known in the case where $c$ is a probability measure as a way of constructing a random variable with a uniform distribution on $[0,1]$.

\begin{lemma} \label{lemma_38}
    We assume Assumption \ref{assumption_Uniform_capacity}.
Set $U:=G_{Z}(Z)$.
Then, the distribution function $G_{U}$ of $U$ is:
$G_{U}(x)=\left\{ \begin{array}{cc}
    0,  &  \text{if}\,\, x<0,\\
    x,  &  \text{if} \,\, x \in [0,1],\\
    1, &  \text{if}\,\, x>1.
 \end{array} \right.$
\end{lemma}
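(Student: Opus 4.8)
The plan is to carry out the classical probability-integral-transform argument with the probability measure replaced by the capacity $c$, keeping careful track of where monotonicity and continuity of $G_Z$ are used. First I would record two structural facts. Since $c$ takes values in $[0,1]$, the distribution function $G_Z$ is $[0,1]$-valued, and hence $U=G_Z(Z)$ takes values in $[0,1]$. Moreover $G_Z$ is non-decreasing: for $t\le t'$ we have $\{Z>t'\}\subseteq\{Z>t\}$, so $c(Z>t')\le c(Z>t)$ by monotonicity of $c$, whence $G_Z(t')\ge G_Z(t)$. The two outer regimes are then immediate: for $x<0$ we have $\{U>x\}=\Omega$, so $c(U>x)=1$ and $G_U(x)=0$; for $x>1$ we have $\{U>x\}=\emptyset$, so $c(U>x)=0$ and $G_U(x)=1$. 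The endpoint $x=1$ is equally direct, since $U\le 1$ gives $\{U>1\}=\emptyset$ and $G_U(1)=1$. Everything thus reduces to proving $c(U>x)=1-x$ for $x\in[0,1)$.

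Fix $x\in[0,1)$ and set $q_x:=\inf S$, where $S:=\{t\in\R:\ G_Z(t)>x\}$. Monotonicity makes $S$ upward closed, and the (extended) values $G_Z(-\infty)=0$, $G_Z(+\infty)=1$ ensure $S\neq\emptyset$ (so $q_x<+\infty$) and, for $x\in(0,1)$, that $S$ is bounded below (so $q_x>-\infty$). The crux is the identity $G_Z(q_x)=x$ when $q_x$ is finite, which is exactly where continuity of $G_Z$ is needed: for every $t>q_x$ there is $s\in S$ with $s<t$, so $G_Z(t)\ge G_Z(s)>x$, and letting $t\downarrow q_x$ gives $G_Z(q_x)\ge x$; for every $t<q_x$ we have $t\notin S$, so $G_Z(t)\le x$, and letting $t\uparrow q_x$ gives $G_Z(q_x)\le x$. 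Granting $G_Z(q_x)=x$, the set identity $\{G_Z(Z)>x\}=\{Z>q_x\}$ follows, since $G_Z(Z(\omega))>x$ precisely when $Z(\omega)>q_x$ (use $G_Z(t)>x$ for $t>q_x$ and $G_Z(t)\le G_Z(q_x)=x$ for $t\le q_x$).

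Finally I would substitute into the definition of the distribution function with respect to $c$. Since $\{U>x\}=\{Z>q_x\}$ and $G_Z(q_x)=1-c(Z>q_x)$, we get
\[
c(U>x)=c(Z>q_x)=1-G_Z(q_x)=1-x,
\]
so $G_U(x)=1-c(U>x)=x$, as claimed. The one configuration left uncovered is $x=0$ with $q_0=-\infty$ (i.e.\ $G_Z>0$ everywhere), handled directly: then $\{U>0\}=\Omega$, so $c(U>0)=1$ and $G_U(0)=0$, which agrees with the formula. I expect the main obstacle to be the two-sided continuity argument giving $G_Z(q_x)=x$ and the accompanying set identity, for this is precisely where Assumption \ref{assumption_Uniform_capacity} cannot be relaxed: flat stretches of $G_Z$ are harmless, but a jump would break $c(U>x)=1-x$.
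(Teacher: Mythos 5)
Your proof is correct. Note that the paper does not actually prove this lemma itself — it only cites \cite{grigorova2014risk} — so there is no internal proof to compare against; your argument is the standard probability-integral-transform argument adapted to a capacity, which is exactly what the cited source does. The essential points are all handled soundly: the reduction to $x\in[0,1)$, the identity $G_Z(q_x)=x$ (which correctly uses \emph{both} one-sided limits, i.e.\ the full continuity of $G_Z$ supplied by Assumption \ref{assumption_Uniform_capacity}), the exact set identity $\{U>x\}=\{Z>q_x\}$, and the separate treatment of the degenerate case $q_0=-\infty$. One feature worth appreciating in your write-up: because you establish an \emph{exact} equality of sets rather than an equality ``up to null sets,'' the conclusion $c(U>x)=c(Z>q_x)$ needs nothing from $c$ beyond monotonicity, groundedness and normalisation — in particular no continuity from below of the capacity — so the lemma holds for an arbitrary capacity, which is precisely the level of generality the paper requires.
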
  

\begin{remark}
    
   Let us consider the particular case  where $c=P$ is a probability measure. In this case,  Assumption~\ref{assumption_Uniform_capacity} amounts to $(\Omega, \F, P)$ being atomless (which can be seen as  a standard assumption in the risk measurement literature), and which is equivalent to $(\Omega, \F, P)$ carrying a uniform  random variable on the interval $[0,1]$. 
\end{remark}

For any $X\in\chi(\F)$, the function $G_X$ is non-decreasing (as $c$ is monotone); it is thus possible to define its generalised inverse (cf., e.g., the Appendix of \cite{FöllmerSchied+2016}).

\begin{definition}
 Let $c$ be a capacity.  
  For a real-valued measurable function $X$, 
every function $ r_X : \left( 0 , 1 \right) \rightarrow \mathbb{R} $ satisfying 
    \[
    \sup \{ x \in \mathbb{R}: G_{X}( x) < t  \}
    \leq
    r_X \left( t\right) 
    \leq
     \sup \{ x \in \mathbb{R} : G_{X}( x) \leq t  \}, \,\, \,\, for \,\, all \,\, t \in  \left( 0 , 1 \right)
    \]
    is called \emph{a quantile function} of $X$ with respect to $c$, where the convention $\sup\{\emptyset\} = -\infty$ is used.
    \\
The functions $r_{X}^-$ and $r_{X}^+$ defined by 
\[
 r_{X}^-(t) :=\sup \{x \in \mathbb{R}: G_X( x) < t  \}  \,\, \text{for all} \,\, t \in (0,1),
\]
\[
r_{X}^+ (t):= \sup \{ x \in \mathbb{R}: G_X( x) \leq t  \} \,\, \text{for all} \,\, t \in (0,1)
\]
 are called \emph{the lower }and \emph{upper quantile functions} of $X$ with respect to $c$.
\end{definition}
We extend the definition to $t=0$ and $t=1$, by setting
$r_{X}^+(1):= r_{X}^-(1):= \lim_{t \uparrow 1}r_{X}^-(t)=\sup_{t < 1}r_{X}^-(t)$ 
and 
$r_{X}^-(0):= r_{X}^+(0):= \lim_{t \downarrow 0}r_{X}^+(t) =\inf_{t>0} r_{X}^+(t)$. 

 For notational convenience, we omit the dependence on $c$ in the notation $G_X$ and
$r_X$ when there is no possible confusion about the capacity.
 The following remark (cf. \cite{FöllmerSchied+2016}) gives us another useful way of writing the lower and upper quantile functions (with respect to a capacity).  
\begin{remark} \label{remark_44}
    The lower and upper quantile functions of $X$ with respect to $c$ can be written: for all $t \in (0, 1)$,
    \[
 r_{X}^-(t) =\inf \{x \in \R: G_X( x) \geq t  \}, \,\,\,\,\,\,\,\,\,
r_{X}^+ (t)= \inf \{ x \in \R: G_X( x) > t\}.
\]
\end{remark}

 The following observation on the distribution function (with respect to a capacity)  of a bounded measurable function $X$ can be found in \cite{grigorova2014risk} (cf. Remark 2.5). 
 \begin{remark}\label{remark_45}  
 Let $c$ be a capacity. For $X \in \chi(\F)$, we have:\\ $\lim_{x \rightarrow -\infty}G_X( x)=0$ and $\lim_{x \rightarrow +\infty}G_X( x)=1$.
 We denote by $G_X(x-)$ and $G_X(x+)$ the left-hand and right-hand limits of $G_X$ at $x$ (these limits exist as $G_X$ is non-decreasing).
 A function $r_X$ is a quantile function of $X$ (with respect to $c$) if and only if $ G_X(r_X(t)-) \leq t \leq G_X(r_X(t)+), \,\,\text{for all} \,\, t \in (0,1).$
     \end{remark}

To fix the notation, we recall that the (classical) Choquet 
     integral of  $X \in \chi(\F)$ with respect to a capacity $c$  
     is defined by: 
    \[
   \EE_{ c}(X) = \int_0^{+\infty}  c(X > x) dx
    +  \int_{- \infty}^{0} \left[ c(X > x) -1 \right] dx.
    \] 

We also recall the definition of the first-order stochastic dominance with respect to a capacity (cf. \cite{grigorova:tel-00878599} for more results on this ordering relation), generalizing the classic first-order stochastic dominance (with respect to a probability).\\
  \begin{definition}
Let $X$ and $Y$ be in $\chi(\F)$ and let $c$ be a capacity. We say that $X$ is smaller than $Y$ in the first-order stochastic dominance (or in the first-order stochastic ordering) with respect to the capacity $c$, denoted by $ X \preceq_{st,c} Y $, if:
\begin{equation}
      \mathbb{E}_c \left( u\left( X \right ) \right ) \leq \mathbb{E}_c \left( u\left( Y \right )  \right ) 
\end{equation}
for all deterministic functions $u: \mathbb{R} \rightarrow \mathbb{R} $ which are non-decreasing.
\end{definition}
From an economic point of view, $X$ is smaller than $Y$ in the first-order stochastic dominance  with respect to the capacity $c$, if all Choquet expected utility (CEU) maximizers (having a non-decreasing utility function) prefer having $Y$ to $X$.  

 The following lemma gives some  useful tools when working with  the first-order stochastic dominance with respect to the capacity $c$. 

 \begin{lemma}\label{charc_1_c}  
Let $X$ and $Y$ be in $\chi(\F)$. 
  Let $c$ be a capacity. 
  If $ X \preceq_{st,c} Y $, then all of the following statements hold true:
 \begin{enumerate}
  \item[(i)]  $ G_X ( x ) \geq  G_Y ( x)$ for all $x \in \mathbb{R} $.
   \item[(ii)] $ r^+_X(t) \leq  r^+_Y (t)$ for all $t \in [0, 1]$.
    \item[(iii)] $ r^-_X (t) \leq  r^-_Y (t)$ for all $t \in [0, 1]$. 
     \end{enumerate}
  \end{lemma}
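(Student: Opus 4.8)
The plan is to establish (i) first, by a judicious choice of test function in the definition of $\preceq_{st,c}$, and then to deduce (ii) and (iii) from (i) using the infimum-characterisation of the quantile functions recalled in Remark~\ref{remark_44}.

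For (i), the key observation is that, for any fixed $a \in \R$, the indicator function $u := \1_{(a,+\infty)}$ is non-decreasing, hence an admissible test function in the definition of $\preceq_{st,c}$. I would then evaluate the Choquet integral of the bounded measurable function $u(X) = \1_{\{X > a\}}$ directly from the defining formula: since $c(\1_{\{X>a\}} > x)$ equals $1$ for $x<0$, equals $c(X > a)$ for $x \in [0,1)$, and vanishes for $x \geq 1$, the two integrals collapse and yield $\EE_c(\1_{\{X>a\}}) = c(X > a)$. Applying the inequality $\EE_c(u(X)) \leq \EE_c(u(Y))$ then gives $c(X > a) \leq c(Y > a)$, whence $G_X(a) = 1 - c(X > a) \geq 1 - c(Y > a) = G_Y(a)$. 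As $a \in \R$ is arbitrary, (i) follows.

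For (iii), I would use the representation $r_X^-(t) = \inf\{x \in \R : G_X(x) \geq t\}$ from Remark~\ref{remark_44}. By (i), whenever $G_Y(x) \geq t$ we also have $G_X(x) \geq G_Y(x) \geq t$, so that $\{x \in \R : G_Y(x) \geq t\} \subseteq \{x \in \R : G_X(x) \geq t\}$; passing to the infimum over each set (the infimum over the larger set being no larger) gives $r_X^-(t) \leq r_Y^-(t)$ for all $t \in (0,1)$. The argument for (ii) is identical, with the strict inequality $G(x) > t$ in place of $G(x) \geq t$ and using $r_X^+(t) = \inf\{x \in \R : G_X(x) > t\}$. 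Finally, both inequalities extend to the endpoints $t=0$ and $t=1$ by passing to the one-sided limits that define $r_X^{\pm}(0)$ and $r_X^{\pm}(1)$.

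The proof is essentially routine once (i) is in hand. The only points requiring care are the explicit evaluation of the Choquet integral of an indicator (which hinges on the piecewise description of $c(\1_{\{X>a\}} > x)$ above) and the correct orientation of the set-inclusion/infimum step in (ii)--(iii), namely that a \emph{larger} defining set produces a \emph{smaller} infimum. I do not expect any genuine obstacle beyond this bookkeeping.
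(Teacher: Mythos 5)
Your proposal is correct and follows essentially the same route as the paper's own proof: statement (i) is obtained by testing the definition of $\preceq_{st,c}$ with the non-decreasing indicator $u=\1_{(x,+\infty)}$ (so that $G_X(x)=1-\EE_c(u(X))$), and (ii)--(iii) then follow from (i) via the set-inclusion/infimum argument based on the characterisations of $r^{\pm}_X$ (Remark~\ref{remark_44}), with the endpoints $t=0,1$ handled by the one-sided limits in their definitions. The only cosmetic difference is that you spell out the evaluation $\EE_c(\1_{\{X>a\}})=c(X>a)$ and use the infimum form for both quantile functions, whereas the paper cites the sup-based definition for (ii); both are equivalent.
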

  \begin{proof}
      For the proof of the first two statements, we refer to the proof of Proposition 3.3.1 in \cite{grigorova:tel-00878599}.
   The proof of the third statement is similar.     We provide a full proof of the lemma  in the appendix for the convenience of the reader.\qed
  \end{proof}

In the following, we define the "distribution invariance" property of $\rho$ (with respect to $c$).
It is used
in \cite{grigorova2014risk} (cf. Remark 3.5) for the case where $\rho$ is a static risk measure on a capacity space. 

 \begin{definition}["Distribution invariance"]\label{defn5}
 Let $c$ be a capacity. 
 Let $\rho: \chi(\F) \rightarrow \chi(\G)$ .
 We say that $\rho$ is "distribution invariant" (with respect to $c$) if:  
  % for $X,Y \in \chi(\F)$ such that  
 $G_X(x)=G_Y(x)$, for all $ x \in \R$, implies $\rho(X)(\omega)=\rho(Y)(\omega), $ for all $\omega \in\Omega$. 
 \end{definition}
 In other words, $\rho$ is "distribution invariant" (with respect to $c$) if any two random losses $X$ and $Y$ with the same distribution function (with respect to $c$) have also the same risk ($\rho(X)(\omega)=\rho(Y)(\omega), $ for all $\omega \in\Omega$). 
\section{The randomly distorted Choquet integral with respect to a $\G$-randomly distorted capacity} \label{sec_rand_dis_Choq}
In this section, we consider $\G$-randomly distorted Choquet integrals with respect to a $\G$-randomly distorted capacity $\phi^{\G} \circ c$, where $\phi^{\G}$ is a $\G$-measurable random distortion function (which we call also $\G$-random distortion function).   
  The following is the precise definition of a $\G$-random distortion function (cf. also \cite{vega2021conditional} and \cite{de2023convex}).
\begin{definition} %\label{defn6.1}
   We say that a mapping $ \phi^{\G}: \Omega \times [0,1] \rightarrow [0,1]$ is a $\G$-random distortion function (or random distortion function, if %there is 
   no %chance of 
   possible confusion  on the $\sigma$-algebra) if:
   \begin{itemize}
      \item[1)] For any $\omega \in \Omega$, the function $t\in [0,1] \mapsto \phi^{\G}(\omega, t)$ is non-decreasing and normalised, that is, $\phi^{\G}(\omega, 0)=0$ and  $\phi^{\G}(\omega, 1)=1$.
      \item[2)] For any $t \in [0,1]$, $\omega \mapsto \phi^{\G}(\omega, t)$ is  $\G$-measurable. %random variable.
   \end{itemize}
\end{definition}

When $\G=\{\Omega, \emptyset\}$, then $\phi^{\G}=\phi$ is deterministic (does not depend on $\omega$). 
In this case, we recover the usual definition of a (deterministic) distortion function, popular in decision theory and in risk measures.  

We define the randomly distorted Choquet integral (with respect to a $\G$-random distortion function  $\phi^{\G}$ and a capacity $c$) as follows:

\begin{definition} \label{defn3}
Let $c$ be a capacity on $ (\Omega,\F)$. 
Let $\phi^{\G}$ be a $\G$-random distortion function.
    The $\G$\emph{-randomly distorted Choquet}
    \emph{integral} of a measurable function $X \in \chi(\F)$ 
     is defined by: for all $\omega \in \Omega$,
    \[
   \EE_{\phi^{\G} \circ c}(X)(\omega) = \int_0^{+\infty} \phi^{\G}(\omega, c(X > x)) dx
    +  \int_{- \infty}^{0} \left[\phi^{\G}(\omega, c(X > x)) -1 \right] dx.
    \]
\end{definition} 
   Since 
   $\phi^\G$ is  a $\G$-random distortion, 
$ \EE_{\phi^{\G} \circ c}(X)$ is $\G$-measurable. 
Let us note also that when $\phi^{\G}=\phi$ is deterministic and $c$ is a probability measure, 
we recover the usual Choquet integral with respect to a distorted probability.
When $\phi^{\G}=\phi=id$ is the deterministic identity function  and $c$ is a capacity, we recover the usual Choquet integral with respect to the capacity $c$.  

\begin{proposition} \label{prop_38}
The randomly distorted Choquet integral $\EE_{\phi^{\G} \circ c}$ satisfies the following properties:
\begin{itemize}
\item[(i)] ("Distribution invariance") If $X, Y \in \chi(\F) $ are such that $G_X(x) = G_Y(x)$ for all $x \in \R$, then $\EE_{\phi^{\G} \circ c}(X) =\EE_{\phi^{\G} \circ c}(Y)$.
        \item [(ii)] (Monotonicity) If $X, Y \in \chi(\F)$ are such that $ X \leq Y $, then $  \EE_{\phi^{\G} \circ c}(X) \leq \EE_{\phi^{\G} \circ c}(Y)$.
       \item [(iii)] (Translation invariance)  For $X \in \chi(\F)$, for $a \in \R$, we have: $  \EE_{\phi^{\G} \circ c}(a+X) = a + \EE_{\phi^{\G} \circ c}(X)$.
       \end{itemize}
\end{proposition}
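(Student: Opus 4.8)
The plan is to reduce all three statements to elementary manipulations of the two integrals in Definition \ref{defn3}, exploiting that $X\in\chi(\F)$ is bounded. First I would record the key preliminary observation: if $\|X\|\le M$, then $c(X>x)=0$ for $x>M$ (since $\{X>x\}=\emptyset$) and $c(X>x)=1$ for $x<-M$ (since $\{X>x\}=\Omega$); because $\phi^{\G}(\omega,0)=0$ and $\phi^{\G}(\omega,1)=1$, the integrand $\phi^{\G}(\omega,c(X>x))$ vanishes on $(M,+\infty)$ while $\phi^{\G}(\omega,c(X>x))-1$ vanishes on $(-\infty,-M)$. Hence both integrals are really over a bounded range with bounded integrands, so they are finite and every change of variable and splitting used below is justified.

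For (i), by Definition \ref{def_CDF_G} we have $c(X>x)=1-G_X(x)$ for every $x\in\R$. Thus the hypothesis $G_X(x)=G_Y(x)$ for all $x$ forces $c(X>x)=c(Y>x)$ for all $x$, whence $\phi^{\G}(\omega,c(X>x))=\phi^{\G}(\omega,c(Y>x))$ for all $\omega$ and all $x$. The two integrands then coincide pointwise, and $\EE_{\phi^{\G}\circ c}(X)=\EE_{\phi^{\G}\circ c}(Y)$ follows immediately. For (ii), if $X\le Y$ then $\{X>x\}\subseteq\{Y>x\}$ for every $x$, so monotonicity of the capacity gives $c(X>x)\le c(Y>x)$; since $t\mapsto\phi^{\G}(\omega,t)$ is non-decreasing (property 1 of a $\G$-random distortion function), we obtain $\phi^{\G}(\omega,c(X>x))\le\phi^{\G}(\omega,c(Y>x))$ for all $\omega$ and $x$. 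Applying monotonicity of the integral to each of the two terms yields $\EE_{\phi^{\G}\circ c}(X)(\omega)\le\EE_{\phi^{\G}\circ c}(Y)(\omega)$ for every $\omega$.

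For (iii), I would start from $\{a+X>x\}=\{X>x-a\}$, which gives $c(a+X>x)=c(X>x-a)$ for all $x$, and then substitute $y=x-a$ in both integrals defining $\EE_{\phi^{\G}\circ c}(a+X)(\omega)$. Writing $\psi(y):=\phi^{\G}(\omega,c(X>y))$, this turns the expression into $\int_{-a}^{+\infty}\psi(y)\,dy+\int_{-\infty}^{-a}[\psi(y)-1]\,dy$. Comparing with $\EE_{\phi^{\G}\circ c}(X)(\omega)=\int_{0}^{+\infty}\psi(y)\,dy+\int_{-\infty}^{0}[\psi(y)-1]\,dy$ and taking the difference (say for $a\ge0$), the $\psi$-contributions over $[-a,0]$ cancel and one is left with $\int_{-a}^{0}1\,dy=a$; the case $a<0$ is handled by the symmetric computation $\int_{0}^{-a}(-1)\,dy=a$, and $a=0$ is trivial. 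Hence $\EE_{\phi^{\G}\circ c}(a+X)(\omega)=a+\EE_{\phi^{\G}\circ c}(X)(\omega)$.

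The only genuinely delicate point is the bookkeeping in (iii): one must split the shifted integrals at the new endpoint $-a$ and keep track of the sign of $a$. Everything else is a direct consequence of the monotonicity and normalisation of $\phi^{\G}$ together with the monotonicity of the capacity $c$, so I expect no real obstacle beyond this careful but routine handling of the limits of integration.
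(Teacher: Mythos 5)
Your proposal is correct and follows essentially the same route as the paper: (i) and (ii) are the same pointwise comparison of the integrands via $c(X>x)=1-G_X(x)$, monotonicity of $c$, and monotonicity of $\phi^{\G}(\omega,\cdot)$, and (iii) is the same change of variables followed by splitting the shifted integrals at $-a$ (the paper compresses this bookkeeping into one line, and your sign-by-sign treatment, together with the preliminary finiteness remark, just makes explicit what the paper leaves implicit).
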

\begin{proof}
\begin{itemize}
\item[(i)] It follows from Definition \ref{defn3} and from the definition of $G_X$ and $G_Y$.
    \item[(ii)] 
     Let $X, Y $ be in $\chi(\F)$ such that $X \leq Y$. By the monotonicity of $c$, for any given $x \in \R$, we have:
   $c(X > x)  \leq c(Y > x)$. 
   Therefore, $\phi^{\G} \left( \omega, c(X > x) \right) \leq \phi^{\G} \left( \omega, c(Y > x) \right)$,  for each $\omega \in \Omega$ (by using the monotonicity of $\phi^{\G}$).
   Hence, we obtain the desired result.
   \item[(iii)] Let $a \in \R$. By Definition \ref{defn3}, we have: for all $\omega \in \Omega$, 
      \[\begin{split}
     &\EE_{\phi^{\G} \circ c}(X+a)(\omega) 
     \\&=   \int_{0}^{+\infty} \phi^{\G} \left( \omega, c(X > x-a) \right)dx+ \int_{-\infty}^{0} [\phi^{\G} \left( \omega, c(X > x-a) \right) -1] dx
       \\&=
        \int_{-a}^{+\infty} \phi^{\G} \left( \omega, c(X > t) \right)dt+ \int_{-\infty}^{-a} [\phi^{\G} \left( \omega, c(X > t) \right) -1] dt
             \\&=
           a+  \int_{0}^{+\infty} \phi^{\G} \left( \omega, c(X > t) \right)dt + \int_{-\infty}^{0} [\phi^{\G} \left( \omega, c(X > t) \right) -1] dt 
       \\& = a+\EE_{\phi^{\G} \circ c}(X)(\omega),
       \end{split}\]
       where we have used a change of variables.\qed
\end{itemize}    
\end{proof}
The translation invariance is sometimes referred to as cash invariance in the risk measurement literature. 
Our next important goal is to establish the comonotonic additivity of $\EE_{\phi^{\G} \circ c}(\cdot)$.
\begin{theorem}[Comonotonic additivity]\label{Theorem_15} 
  Let $c$ be a continuous from below capacity. 
We assume Assumption \ref{assumption_Uniform_capacity}.
Let $\phi^{\G}$ be a $\G$-random distortion function.
 If $X$ and $Y$ in $\chi(\F)$ are comonotonic,
  then:  \begin{equation} \label{eq156}
  \EE_{\phi^{\G} \circ c}(X+Y) = \EE_{\phi^{\G} \circ c}(X) + \EE_{\phi^{\G} \circ c}(Y). 
  \end{equation}
\end{theorem}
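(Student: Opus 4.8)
The plan is to prove \eqref{eq156} pointwise in $\omega$. Fixing $\omega \in \Omega$, I first observe that $c_\omega(\cdot) := \phi^{\G}(\omega, c(\cdot))$ is again a capacity on $(\Omega,\F)$: groundedness and normalisation come from $\phi^{\G}(\omega,0)=0$ and $\phi^{\G}(\omega,1)=1$, and monotonicity from the monotonicity of $c$ and of $t\mapsto\phi^{\G}(\omega,t)$. By Definition~\ref{defn3}, $\EE_{\phi^{\G}\circ c}(X)(\omega)$ is precisely the classical Choquet integral of $X$ with respect to $c_\omega$, so the whole statement reduces to the comonotonic additivity of the Choquet integral against an arbitrary capacity, for each fixed $\omega$. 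Rather than invoking this as a black box, I would exploit the quantile machinery set up in the preliminaries, which turns the problem into a linearity statement.

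The key device is the ``uniform'' variable $U:=G_Z(Z)$ supplied by Assumption~\ref{assumption_Uniform_capacity} and Lemma~\ref{lemma_38}, for which $G_U(u)=u$ on $[0,1]$. First I would show that, for every $W\in\chi(\F)$, the function $r^+_W(U)$ has the same distribution function with respect to $c$ as $W$; here continuity from below of $c$ is what guarantees the right-continuity of $x\mapsto c(W>x)$ (since $\{W>x_n\}\uparrow\{W>x\}$ as $x_n\downarrow x$) and hence the clean matching of $G_W$ with its generalised inverse as in Remark~\ref{remark_45}. Combined with the ``distribution invariance'' of the distorted integral (Proposition~\ref{prop_38}(i)), this gives $\EE_{\phi^{\G}\circ c}(W)(\omega)=\EE_{\phi^{\G}\circ c}\big(r^+_W(U)\big)(\omega)$. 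Moreover, for any non-decreasing $h$, using $G_U(u)=u$ one computes $c(h(U)>x)$ explicitly and obtains the linear representation
\[
\EE_{\phi^{\G}\circ c}(h(U))(\omega)=\int_{(0,1)} h(t)\,d\beta_\omega(t),\qquad \beta_\omega(t):=1-\phi^{\G}(\omega,1-t),
\]
with $\beta_\omega$ non-decreasing and normalised. The crucial feature is that the right-hand side is \emph{linear} in $h$.

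The heart of the argument, and the step I expect to be the main obstacle, is the comonotonic quantile-additivity $r^+_{X+Y}=r^+_X+r^+_Y$ on $(0,1)$. I would prove it using the fact that comonotonicity forces $X$ and $Y$ to be constant on the level sets of $W:=X+Y$, so that $X=f(W)$ and $Y=g(W)$ for non-decreasing $f,g$ with $f+g=\mathrm{id}$; since $f$ and $g$ then have slopes in $[0,1]$ they are $1$-Lipschitz, hence continuous, and a transformation rule for quantiles under a continuous non-decreasing map with respect to $c$ (isolated as a technical lemma, proved in the appendix) yields $r^+_X=f(r^+_W)$ and $r^+_Y=g(r^+_W)$; adding and using $f+g=\mathrm{id}$ gives the claim. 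With this in hand the proof closes: applying the previous paragraph to $h=r^+_X$, $h=r^+_Y$ and $h=r^+_{X+Y}=r^+_X+r^+_Y$, and using the linearity of $\int_{(0,1)}\,d\beta_\omega$,
\[
\EE_{\phi^{\G}\circ c}(X+Y)(\omega)=\int_{(0,1)}(r^+_X+r^+_Y)\,d\beta_\omega=\EE_{\phi^{\G}\circ c}(X)(\omega)+\EE_{\phi^{\G}\circ c}(Y)(\omega),
\]
for every $\omega$, which is exactly \eqref{eq156}. One point I would watch is that the quantile-sum must hold at \emph{every} $t$ charged by $\beta_\omega$, which may have atoms where $\phi^{\G}(\omega,\cdot)$ jumps; working throughout with the upper quantile $r^+$ (rather than $r^-$) and proving the transformation rule for all $t\in(0,1)$ sidesteps any discrepancy on the at most countable flat set of $G_{X+Y}$.
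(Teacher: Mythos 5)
Your overall architecture --- pass to quantiles of the ``uniform'' variable $U$, then exploit linearity of a representing functional on non-decreasing functions --- is genuinely different from the paper's proof, which first proves \eqref{eq156} for non-negative step functions via the explicit formula of Lemma \ref{lemma_36} and the common-partition representation of Lemma \ref{lemma_30}, and then passes to general elements of $\chi(\F)$ by the uniform approximation of Proposition \ref{prop_39} together with Lipschitz continuity and translation invariance. However, your route has a genuine gap: the representation $\EE_{\phi^{\G}\circ c}(h(U))(\omega)=\int_{(0,1)}h(t)\,d\beta_\omega(t)$ with $\beta_\omega(t)=1-\phi^{\G}(\omega,1-t)$ is false whenever $\phi^{\G}(\omega,\cdot)$ has jumps, and your proposed safeguard (working with $r^+$ throughout) does not repair it. Concretely, take $\phi^{\G}(\omega,t)=\1_{(1-\alpha,1]}(t)$ (the paper's own Example 1) and $X=\1_C$ with $c(C)=1-\alpha$ (such a $C$ exists under Assumption \ref{assumption_Uniform_capacity}). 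Then $\EE_{\phi^{\G}\circ c}(X)(\omega)=\phi^{\G}(\omega,1-\alpha)=0$, whereas $d\beta_\omega$ is the Dirac mass at $\alpha$ and $r^+_X(\alpha)=1$, so $\int_{(0,1)}r^+_X\,d\beta_\omega=1$. Worse, no fixed convention for the quantile can rescue a Stieltjes formula: the distortions $\1_{(1-\alpha,1]}$ and $\1_{[1-\alpha,1]}$ induce the same measure $\delta_\alpha$, yet their Choquet integrals of $X$ are $r^-_X(\alpha)$ and $r^+_X(\alpha)$ respectively; the functional $h\mapsto\EE_{\phi^{\G}\circ c}(h(U))(\omega)$ depends on the values of $\phi^{\G}(\omega,\cdot)$ \emph{at} its jump points, which the measure $d\beta_\omega$ cannot see. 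Since the paper's motivating examples (randomised VaR/GVaR) are exactly jump distortions, this is not a removable corner case.

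What your proof actually needs is weaker than the representation, namely only additivity of $h\mapsto\EE_{\phi^{\G}\circ c}(h(U))(\omega)$ over non-decreasing $h$. That statement is true --- a layer-cake computation over the levels $\lambda$ of $\phi^{\G}(\omega,1-\cdot)$ shows the functional is a mixture of evaluations $h\mapsto h(t_\lambda-)$ and $h\mapsto h(t_\lambda+)$, each additive in $h$ --- but it is essentially the comonotonic additivity being proved, restricted to pairs $(h_1(U),h_2(U))$, so it requires a genuine argument in place of the false Stieltjes identity; supplying one amounts to redoing the work the paper does with step functions and approximation. Alternatively, your opening observation already yields a complete (if less self-contained) proof: for fixed $\omega$, $c_\omega:=\phi^{\G}(\omega,c(\cdot))$ is a capacity and $\EE_{\phi^{\G}\circ c}(\cdot)(\omega)$ is its classical Choquet integral, so the classical comonotonic additivity theorem of Dellacherie--Schmeidler applies pointwise, without continuity from below or Assumption \ref{assumption_Uniform_capacity}. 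Your remaining ingredient, the comonotonic quantile additivity $r^+_{X+Y}=r^+_X+r^+_Y$, is not where the difficulty lies (the paper invokes the analogous identity for $r^-$ from the cited thesis rather than proving a transformation rule), but as it stands the central linearity step of your proposal fails.
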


We present some results which will be used in the proof of Theorem \ref{Theorem_15}, and which are interesting on their own account.

Let $X$ be a non-negative step function defined by:
\begin{equation} \label{eq154}
   X = \sum_{i=1}^n x_i\1_{A_i}, 
\end{equation} 
where the $x_i$'s are non-negative real numbers, and where the $ A_i$'s are disjoint and form a measurable partition of $\Omega$.  Without loss of generality, we can assume that the numbers $x_i$ are ranged in descending order (i.e. $x_1 \geq x_2 \geq \cdots \geq x_n \geq 0$).

\begin{lemma}[The $\G$-randomly distorted Choquet integral formula for a non-negative step function] \label{lemma_36}
Let $c$ be a capacity on $ (\Omega,\F)$. 
For a non-negative step function $X = \sum_{i=1}^n x_i\1_{A_i}$ of the form defined in \eqref{eq154},
we have: for all $\omega \in \Omega$,
\begin{equation}\label{eq153}
 \EE_{\phi^{\G} \circ c}\left(\sum_{i=1}^n x_i\1_{A_i}\right)(\omega) = \sum_{i=1}^n  (x_i-x_{i+1}) \, \phi^{\G}(\omega, c(\cup_{k=1}^i A_k)), \,\,  \text{where} \,\,  x_{n+1} := 0.
\end{equation} 
\end{lemma}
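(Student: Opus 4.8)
The plan is to start directly from Definition \ref{defn3} and exploit the non-negativity of $X$ together with the descending ordering of the $x_i$'s. First I would observe that since $X \geq 0$, for every $x < 0$ we have $\{X > x\} = \Omega$, so $c(X > x) = c(\Omega) = 1$ and hence $\phi^{\G}(\omega, c(X > x)) = \phi^{\G}(\omega, 1) = 1$ by the normalisation of the distortion function. Consequently the integrand of the second integral in Definition \ref{defn3} is identically zero, and
$$\EE_{\phi^{\G} \circ c}(X)(\omega) = \int_0^{+\infty} \phi^{\G}(\omega, c(X > x))\, dx.$$

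The key computation is to identify the level sets $\{X > x\}$ for $x \geq 0$. Since the $A_i$ partition $\Omega$ and $X$ equals $x_i$ on $A_i$, we have $\{X > x\} = \cup_{k : x_k > x} A_k$. Using that $x_1 \geq x_2 \geq \cdots \geq x_n \geq 0 =: x_{n+1}$, the index set $\{k : x_k > x\}$ is an initial segment $\{1, \dots, i\}$; concretely, for $x \in [x_{i+1}, x_i)$ one has $x_i > x \geq x_{i+1}$, so $\{X > x\} = \cup_{k=1}^i A_k$, while for $x \geq x_1$ the set is empty and $\phi^{\G}(\omega, c(\emptyset)) = \phi^{\G}(\omega, 0) = 0$. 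Thus on each half-open interval $[x_{i+1}, x_i)$ the integrand $\phi^{\G}(\omega, c(X > x))$ takes the constant value $\phi^{\G}(\omega, c(\cup_{k=1}^i A_k))$.

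Finally I would split the integral over $[0, +\infty)$ along the partition of $[0, x_1)$ into the intervals $[x_{i+1}, x_i)$, $i = 1, \dots, n$ (the contribution from $[x_1, +\infty)$ vanishing), and evaluate each piece:
$$\int_{x_{i+1}}^{x_i} \phi^{\G}(\omega, c(\cup_{k=1}^i A_k))\, dx = (x_i - x_{i+1})\, \phi^{\G}(\omega, c(\cup_{k=1}^i A_k)).$$
Summing over $i$ yields exactly \eqref{eq153}. There is no substantive obstacle here: the argument is a direct evaluation of the Choquet integral of a simple function, where the random distortion $\phi^{\G}(\omega, \cdot)$ plays a purely passive role (the $\omega$-dependence is carried along unchanged). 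The only mild care needed is in handling the strict inequality defining $\{X > x\}$ at the endpoints and the possibility of ties among the $x_i$'s; when $x_i = x_{i+1}$ the corresponding interval is empty and contributes zero, so the formula remains valid with no loss of generality.
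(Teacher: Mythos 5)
Your proposal is correct and follows essentially the same route as the paper's proof: both reduce the Choquet integral to $\int_0^{x_1}\phi^{\G}(\omega, c(X>z))\,dz$, identify $\{X>z\}=\cup_{k=1}^i A_k$ on each interval $[x_{i+1},x_i)$ using the descending order of the $x_i$'s, and sum the resulting constant pieces. Your version is in fact slightly more explicit than the paper's (spelling out why the integral over $(-\infty,0)$ and over $[x_1,\infty)$ vanishes, and noting that ties $x_i=x_{i+1}$ contribute empty intervals), but these are presentational refinements of the identical argument.
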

\begin{proof}
We have: for all $\omega \in \Omega$,
\[ \begin{split}
\EE_{\phi^{\G} \circ c}(X)(\omega) &= \int_0^{+\infty} \phi^{\G}(\omega , c(X > z)) dz  = \int_0^{x_1} \phi^{\G}(\omega , c(X > z)) dz
\\ &=  \sum_{i=1}^n \int_{x_{i+1}}^{x_i} \phi^{\G}(\omega, c(X > z)) dz
=  \sum_{i=1}^n \int_{x_{i+1}}^{x_i} \phi^{\G}(\omega, c(\cup_{k=1}^i A_k)) dz
\\ &=  \sum_{i=1}^n \phi^{\G}(\omega, c(\cup_{k=1}^i A_k)) \int_{x_{i+1}}^{x_i} dz
=  \sum_{i=1}^n  (x_i-x_{i+1}) \, \phi^{\G}(\omega, c(\cup_{k=1}^i A_k)),
\end{split}
\]
 where we have used that $\{X > z \}=\cup_{k=1}^i A_k$, for each $z \in [x_{i+1}, x_i)$ (since $x_k > z$ for $k \leq i $).\qed
\end{proof}

For the case where $\phi^{\G}$ is deterministic, the above formula \eqref{eq153} can be found, for instance, in \cite{schmeidler1986integral}.

In the following lemma, we show that the $\G$-randomly distorted Choquet integral is Lipschitz continuous on $\chi(\F)$ endowed with the sup-norm $||\cdot||$.

\begin{lemma}[Lipschitz continuity of $\G$-randomly distorted Choquet integrals]\label{lemma_37} 
Let $c$ be a capacity. 
The $\G$-randomly distorted Choquet integral 
is Lipschitz continuous (with respect to the supremum norm $||\cdot||$);
more precisely,
for all $X, Y$ in $\chi(\F)$, we have: 
\[
||\EE_{\phi^{\G} \circ c}(X) - \EE_{\phi^{\G} \circ c}(Y)|| \leq ||X-Y||.
\]
\end{lemma}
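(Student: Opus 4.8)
The plan is to deduce the Lipschitz estimate directly from the two structural properties already established in Proposition \ref{prop_38}, namely monotonicity (ii) and translation invariance (iii), without ever revisiting the integral representation in Definition \ref{defn3}. The idea is the standard one for translation-invariant monotone functionals: sandwich $X$ between two shifts of $Y$, where the shift is the constant equal to the sup-distance between the two inputs.

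First I would set $\delta := ||X - Y||$. By the very definition of the supremum norm, $|X(\omega) - Y(\omega)| \leq \delta$ for every $\omega \in \Omega$, which is equivalent to the two-sided pointwise inequality $Y - \delta \leq X \leq Y + \delta$ holding as an inequality between elements of $\chi(\F)$ (here $Y \pm \delta$ denotes the bounded $\F$-measurable function obtained by adding the finite constant $\pm\delta$ to $Y$). Next I would apply monotonicity (Proposition \ref{prop_38}(ii)) to each of these inequalities, obtaining
\[
\EE_{\phi^{\G} \circ c}(Y - \delta) \leq \EE_{\phi^{\G} \circ c}(X) \leq \EE_{\phi^{\G} \circ c}(Y + \delta)
\]
pointwise on $\Omega$. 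Translation invariance (Proposition \ref{prop_38}(iii)) then rewrites the two outer terms as $\EE_{\phi^{\G} \circ c}(Y) - \delta$ and $\EE_{\phi^{\G} \circ c}(Y) + \delta$ respectively, so that for every $\omega \in \Omega$,
\[
-\delta \leq \EE_{\phi^{\G} \circ c}(X)(\omega) - \EE_{\phi^{\G} \circ c}(Y)(\omega) \leq \delta.
\]
This is precisely $|\EE_{\phi^{\G} \circ c}(X)(\omega) - \EE_{\phi^{\G} \circ c}(Y)(\omega)| \leq \delta$ for all $\omega$, and taking the supremum over $\omega \in \Omega$ yields the claimed bound $||\EE_{\phi^{\G} \circ c}(X) - \EE_{\phi^{\G} \circ c}(Y)|| \leq ||X-Y||$.

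I do not anticipate any genuine obstacle: the argument is a short two-sided sandwiching resting entirely on Proposition \ref{prop_38}. The only points requiring a little care are bookkeeping ones — verifying that $Y-\delta \leq X \leq Y+\delta$ is a legitimate inequality in $\chi(\F)$ (it is, since $\delta$ is a finite constant and $X,Y$ are bounded), and combining the applications of monotonicity and translation invariance in the correct order so that $\delta$ enters with the appropriate sign on each side. In particular, I would stress that neither continuity from below of $c$ nor Assumption \ref{assumption_Uniform_capacity} is invoked here; the estimate holds for an arbitrary capacity $c$ and an arbitrary $\G$-random distortion function $\phi^{\G}$.
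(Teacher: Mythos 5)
Your proposal is correct and is essentially the paper's own argument: the paper likewise deduces the bound from monotonicity and translation invariance of $\EE_{\phi^{\G} \circ c}$ (Proposition \ref{prop_38}), using the two inequalities $X \leq Y + ||X-Y||$ and $Y \leq X + ||X-Y||$, which is the same sandwich you wrote in two-sided form. Your closing remark that neither continuity from below of $c$ nor Assumption \ref{assumption_Uniform_capacity} is needed is also consistent with the paper, which states the lemma for an arbitrary capacity.
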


\begin{proof}
The proof of the lemma uses the same arguments as Lemma 4.3 in \cite{FöllmerSchied+2016}. 
Let $X, Y \in \chi(\F)$. Then, $ X \leq Y + ||X-Y||$ %a.s.. % as $ X-Y \leq |X-Y| \leq  ||X-Y||$
and  $ Y \leq X + ||X-Y||$. 
 Therefore, by the monotonicity and the cash invariance of $\EE_{\phi^{\G} \circ c}$ we have:  for all $\omega \in \Omega$,
 \[ \begin{split}
 \EE_{\phi^{\G} \circ c}(X)(\omega) \leq \EE_{\phi^{\G} \circ c}(Y+ ||X-Y||) (\omega) =\EE_{\phi^{\G} \circ c}(Y)(\omega) + ||X-Y|| 
 \\ 
 \EE_{\phi^{\G} \circ c}(Y)(\omega) \leq \EE_{\phi^{\G} \circ c}(X+ ||X-Y||) (\omega)  =\EE_{\phi^{\G} \circ c}(X) (\omega)+ ||X-Y||.
 \end{split} \] 
 Hence, we obtain: $|\EE_{\phi^{\G} \circ c}(X)(\omega) - \EE_{\phi^{\G} \circ c}(Y)(\omega)| \leq ||X-Y||$ for all $\omega \in \Omega$.
  Hence, $||\EE_{\phi^{\G} \circ c}(X) - \EE_{\phi^{\G} \circ c}(Y)|| \leq ||X-Y||$.\qed
\end{proof}
 \begin{remark}\label{remark_32}
     In fact, this result holds true for any $\rho: \chi(\F) \rightarrow \chi(\G)$ such that $\rho$ is monotone and cash invariant. This result is well-known in the (conditional) risk measurement literature (cf., e.g., \cite{FöllmerSchied+2016}).
 \end{remark}
 \begin{remark} \label{remark_40}
 As a consequence of Lemma \ref{lemma_37}, $\EE_{\phi^{\G} \circ c}$ is a continuous (non-linear) operator in sup-norm. 
  If $ ||X_n-X|| \rightarrow 0$ as $n \rightarrow \infty$, 
    then $ || \EE_{\phi^{\G} \circ c}(X_n) - \EE_{\phi^{\G} \circ c}(X)|| \rightarrow 0$ as $n \rightarrow \infty$.
\end{remark} 

The following lemma provides a useful characterisation of the comonotonicity of two non-negative step functions (cf. \cite{schmeidler1986integral}).  We provide in the Appendix a proof of this lemma for the convenience of the reader.

\begin{lemma} \label{lemma_30}
    Two non-negative step functions $X$ and $Y$ in $\chi(\F)$ are comonotonic if and only if there exists a positive integer $n$, a partition of $\Omega$ into $n$ pairwise disjoint sets in $\F$, $(A_i)_{i=1}^n$, and real numbers $x_1 \geq x_2 \geq \cdots \geq x_n \geq 0$ and $y_1 \geq y_2 \geq \cdots \geq y_n \geq 0$
    such that $X= \sum_{i=1}^n x_i\1_{A_i}$ and $Y= \sum_{i=1}^n y_i\1_{A_i}$. 
\end{lemma}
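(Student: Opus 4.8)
The plan is to prove the two implications separately. The reverse (``if'') direction is a direct computation: assuming $X = \sum_{i=1}^n x_i \1_{A_i}$ and $Y = \sum_{i=1}^n y_i \1_{A_i}$ with $(A_i)_{i=1}^n$ a partition and both coefficient sequences non-increasing, I would take arbitrary points $\omega \in A_i$ and $\bar{\omega} \in A_j$, so that $X(\omega) - X(\bar{\omega}) = x_i - x_j$ and $Y(\omega) - Y(\bar{\omega}) = y_i - y_j$, and observe that the two differences always share the same sign (both $\geq 0$ when $i \leq j$, both $\leq 0$ when $i \geq j$) precisely because the two sequences are monotone in the same direction. Hence the product is non-negative, which is exactly the comonotonicity condition of Definition \ref{defn4.2}.

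The substantive (``only if'') direction requires constructing the common representation. Since $X$ and $Y$ are step functions, each takes finitely many values, so I would form the common refinement of the two induced partitions: the non-empty sets $B_k \in \F$ on which both $X$ and $Y$ are constant, say $X \equiv a_k$ and $Y \equiv b_k$. These finitely many $B_k$ form a measurable partition of $\Omega$. Applying comonotonicity to points $\omega \in B_k$ and $\bar{\omega} \in B_l$ then turns into the purely numerical condition $(a_k - a_l)(b_k - b_l) \geq 0$ for all indices $k, l$.

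The heart of the argument is to show that the blocks can be relabelled so that the $a$'s and the $b$'s decrease \emph{simultaneously}. I would sort the blocks by the value $a_k$ in decreasing order, breaking ties (equal $a$-values) by decreasing $b_k$, and relabel the result as $A_1, \ldots, A_n$ with associated values $x_i, y_i$. By construction $x_1 \geq \cdots \geq x_n$, and all values are $\geq 0$ since $X, Y \geq 0$. It then remains to check $y_1 \geq \cdots \geq y_n$: for any pair of blocks with $p$ placed before $q$, either $x_p > x_q$, in which case the comonotonicity relation $(x_p - x_q)(y_p - y_q) \geq 0$ forces $y_p \geq y_q$, or else $x_p = x_q$, in which case the tie-breaking rule yields $y_p \geq y_q$ directly. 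This gives the desired representation $X = \sum_{i=1}^n x_i \1_{A_i}$ and $Y = \sum_{i=1}^n y_i \1_{A_i}$.

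The main obstacle is precisely this last step, namely verifying that a single ordering renders both coefficient sequences non-increasing. The delicate point is the treatment of ties in the $a$-values: comonotonicity imposes no constraint between two blocks sharing the same $X$-value, so the secondary sort by $b$ is essential, and one must confirm that grouping equal-$a$ blocks contiguously (which is automatic under the primary sort) keeps the implication valid across the entire index range, not merely between adjacent blocks.
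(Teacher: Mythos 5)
Your proof is correct, and it actually establishes more than the paper's own write-up does. Both arguments share the same skeleton: the ``if'' direction is the same elementary sign check, and the ``only if'' direction starts by intersecting the partitions induced by $X$ and $Y$ into a common refinement. The difference is in how the refined blocks get ordered. The paper treats only the two-valued case (``for the sake of simplicity and clarity, we consider the case when $n=2$''): there the refinement has blocks $E_{11},E_{12},E_{21},E_{22}$, and comonotonicity is used to show that at least one of the two cross blocks $E_{12},E_{21}$ is empty, after which a three-block ordered representation is assembled by hand. Your lexicographic sort is the general-$n$ replacement for that emptiness observation: the numerical constraint $(a_k-a_l)(b_k-b_l)\geq 0$ on non-empty refined blocks (which, for $n=2$, is exactly what forces one cross block to be empty) makes the primary sort by decreasing $a$-value automatically order the $b$-values wherever the $a$-values differ strictly, and your tie-breaking rule correctly supplies the ordering on blocks with equal $a$-value, where comonotonicity imposes no constraint at all --- the subtlety you rightly flag. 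What the paper's version buys is a short, concrete illustration; what yours buys is a proof of the lemma as actually stated and as actually used elsewhere in the paper (Step~1 of the proof of Theorem~\ref{Theorem_15} and Proposition~\ref{prop_39} both invoke Lemma~\ref{lemma_30} for step functions with $n$ blocks, not $2$), so your argument closes a gap that the paper's appendix leaves implicit.
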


\begin{proposition}[Approximation of non-negative comonotonic pair in $\chi(\F)$] \label{prop_39} 
     Let $c$ be a continuous from below capacity. 
We assume Assumption \ref{assumption_Uniform_capacity}.
 Let $U:=G_Z(Z)$ (as in Lemma \ref{lemma_38}).
   Let $X, Y$ be two non-negative \textbf{comonotonic} measurable functions in $ \chi(\F)$.
     Then, we have the following:
     \begin{itemize}
     \item $X$ and $ r_X^-(U)$ (respectively $Y$ and $r_Y^-(U)$) have the same distribution function with respect to $c$.
    \item  Moreover, $X+Y$ and $r^-_X(U)+r^-_Y(U)$ have the same distribution function with respect to $c$.
       \item There exist two sequences of non-negative step functions $(X_n)$ and $(Y_n)$ such that $X_n$ and $Y_n$ are comonotonic, for each $n$, and 
    such that $ ||X_n-r_X^-(U)|| \rightarrow 0$, $ ||Y_n-r_Y^-(U)|| \rightarrow 0$ and $ ||X_n+Y_n - (r_X^-(U)+r_Y^-(U))|| \rightarrow 0$ as $n \rightarrow \infty$.
     \item Moreover, for all $\omega \in \Omega$,
     $ \EE_{\phi^{\G} \circ c} (X_n)(\omega) \xrightarrow[n \rightarrow \infty]{}    \EE_{\phi^{\G} \circ c}(X)(\omega) $,
            $\EE_{\phi^{\G} \circ c} (Y_n)(\omega)  \xrightarrow[n \rightarrow \infty]{}    \EE_{\phi^{\G} \circ c}(Y)(\omega) $ 
            and 
            $ \EE_{\phi^{\G} \circ c} (X_n+Y_n)(\omega)  \xrightarrow[n \rightarrow \infty]{}    \EE_{\phi^{\G} \circ c}(X+Y)(\omega) $.
     \end{itemize}
\end{proposition}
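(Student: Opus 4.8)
The plan is to handle the four assertions in order, leaning on the distribution invariance (Proposition \ref{prop_38}(i)) and the Lipschitz continuity (Lemma \ref{lemma_37}) of $\EE_{\phi^{\G} \circ c}$, together with the uniform law of $U=G_Z(Z)$ from Lemma \ref{lemma_38}. For the first assertion I would first note that, since $c$ is continuous from below, $G_X$ is right-continuous: $\{X>x\}=\bigcup_{y>x}\{X>y\}$ is an increasing union, so $c(X>x)=\lim_{y\downarrow x}c(X>y)$, giving $G_X(x+)=G_X(x)$. With $G_X$ non-decreasing and right-continuous and $r_X^-(t)=\inf\{y:G_X(y)\ge t\}$ (Remark \ref{remark_44}), the generalised-inverse identity $r_X^-(t)>x\iff t>G_X(x)$ holds for all $x\in\R$ and $t\in(0,1)$. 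Hence $\{r_X^-(U)>x\}=\{U>G_X(x)\}$ as subsets of $\Omega$, and since $G_X(x)\in[0,1]$, the uniform law of $U$ yields $c(r_X^-(U)>x)=1-G_X(x)=c(X>x)$, that is $G_{r_X^-(U)}=G_X$; the same argument applies to $Y$.

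For the second assertion I would invoke the comonotonic additivity of the lower quantile functions, $r_{X+Y}^-(t)=r_X^-(t)+r_Y^-(t)$ for all $t\in(0,1)$, which holds because $X$ and $Y$ are comonotonic (a classical quantile property, to be cited or recorded as a short appendix lemma). Then $r_X^-(U)+r_Y^-(U)=r_{X+Y}^-(U)$ pointwise, and applying the first assertion to the non-negative bounded function $X+Y\in\chi(\F)$ shows that $X+Y$ and $r_X^-(U)+r_Y^-(U)$ share the same distribution function with respect to $c$.

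Since $X,Y\ge0$, the functions $r_X^-$ and $r_Y^-$ are non-negative, non-decreasing and bounded on $(0,1)$. I would set $X_n:=\tfrac1n\lfloor n\,r_X^-(U)\rfloor$ and $Y_n:=\tfrac1n\lfloor n\,r_Y^-(U)\rfloor$; these are $\F$-measurable (as $U$ is), non-negative step functions of the form \eqref{eq154}, and, being non-decreasing functions of the single random variable $U$, they are comonotonic (for any $\omega,\bar\omega$ the increments $X_n(\omega)-X_n(\bar\omega)$ and $Y_n(\omega)-Y_n(\bar\omega)$ both carry the sign of $U(\omega)-U(\bar\omega)$). The floor construction gives $||X_n-r_X^-(U)||\le 1/n$, $||Y_n-r_Y^-(U)||\le 1/n$, hence $||X_n+Y_n-(r_X^-(U)+r_Y^-(U))||\le 2/n$, all tending to $0$. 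For the last assertion, distribution invariance and the first two bullets give $\EE_{\phi^{\G}\circ c}(X)=\EE_{\phi^{\G}\circ c}(r_X^-(U))$ and $\EE_{\phi^{\G}\circ c}(X+Y)=\EE_{\phi^{\G}\circ c}(r_X^-(U)+r_Y^-(U))$; Lipschitz continuity then yields $||\EE_{\phi^{\G}\circ c}(X_n)-\EE_{\phi^{\G}\circ c}(X)||\le ||X_n-r_X^-(U)||\to 0$, and likewise for $Y_n$ and $X_n+Y_n$, so in particular the stated pointwise convergences hold.

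The two genuinely delicate points are the distribution equalities. In the first, it is the right-continuity of $G_X$ (hence continuity from below of $c$) together with the uniform law of $U$ (Assumption \ref{assumption_Uniform_capacity}) that let me assert $\{r_X^-(U)>x\}=\{U>G_X(x)\}$ as an exact set identity, rather than only up to boundary sets such as $\{U=G_X(x)\}$. The comonotonic additivity $r_{X+Y}^-=r_X^-+r_Y^-$ is the key structural input behind the second, and I expect its careful justification (valid for every $t$, not merely at continuity points of the quantile functions) to be the main obstacle; the remaining approximation and convergence steps are then routine given the Lipschitz property already in hand.
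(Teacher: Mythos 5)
Your proposal is correct and reaches all four bullets, but two of its steps take a genuinely different (and in one case more robust) route than the paper. For the first bullet you argue exactly as the paper does: right-continuity of $G_X$ from continuity from below, the equivalence $r_X^-(t)\leq x \iff G_X(x)\geq t$, and the uniform law of $U$; the one caveat is that the exact set identity $\{r_X^-(U)>x\}=\{U>G_X(x)\}$ you assert is only justified by your equivalence for $t\in(0,1)$, while $U$ may attain $0$ (e.g.\ for $X\equiv 5$ and $x=3$ the left set is $\Omega$ and the right set is $\{U>0\}$); the discrepancy is contained in $\{U=0\}$, occurs only when $G_X(x)=0$, and is harmless because then $c(U>G_X(x))=c(U>0)=1-G_U(0)=1$, but it deserves a line (the paper glosses over the same boundary point). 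For the second bullet the paper computes $G_{f(U)}$ for $f=r_X^-+r_Y^-$ through the composition rule $G_{f(U)}=G_U\circ\check{f}$ with $\check{f}(z)=\sup\{y:f(y)\leq z\}$ (Yan's argument) and then identifies $\check{f}=G_{X+Y}$; you instead observe that $r_X^-(U)+r_Y^-(U)=r^-_{X+Y}(U)$ pointwise and simply apply your first bullet to $X+Y$ --- this is shorter and correct, and it rests on the same essential input, the comonotonic additivity of lower quantile functions, which in the capacity setting is not merely ``classical'' but is precisely Proposition 1.3.1 of \cite{grigorova:tel-00878599} (the reference the paper invokes); you should cite that, and note that the pointwise identity must also hold at $t\in\{0,1\}$ (it does, by the $\sup$/$\inf$ conventions defining $r^-_X(1)$ and $r^-_X(0)$, since $U$ can take these values). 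For the third bullet your construction $X_n=\tfrac1n\lfloor n\,r_X^-(U)\rfloor$, $Y_n=\tfrac1n\lfloor n\,r_Y^-(U)\rfloor$ is a real improvement: the paper partitions $[0,1]$, sets $X_n=\sum_i r_X^-(b^n_i)\1_{A^n_i}$, and deduces uniform convergence from monotonicity and left-continuity of $r_X^-$, an argument that is delicate precisely where $r_X^-$ jumps (the claimed uniform bound $r_X^-(b^n_{i-1})-r_X^-(b^n_i)<\epsilon$ need not hold across a jump, no matter how fine the partition), whereas your range-discretisation gives $||X_n-r_X^-(U)||\leq 1/n$ with no continuity input whatsoever, and comonotonicity of $X_n,Y_n$ is immediate since both are non-decreasing functions of the single function $U$. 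The final bullet (distribution invariance plus the Lipschitz bound of Lemma \ref{lemma_37}) is identical in both proofs.
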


In order to prove the above proposition, we make the following observations
(for the case where $c$ is a probability measure, they can be found, for instance, in \cite{embrechts2013note} %(cf. Proposition 1).
or \cite{FöllmerSchied+2016}).

\begin{remark}\label{remark_41}
Let $c$ be a capacity. 
If $c$ is \emph{continuous from below}, then $G_X$ is right-continuous and $r_X^-$ is left-continuous. 
Indeed, let $(x_n)$ and $x$ be such that $x_n \downarrow x$.
We set $A_n:=\{X>x_n\}$ for each $n \in \mathbb{N}$, and $A:=\{X>x\}$. 
Then, $A_n \subset A_{n+1}$ for each $n \in \mathbb{N}$.
Moreover, $\cup_{n} A_n =A$.
Hence, as $c$ is continuous from below, we get $c(A)=c(\cup_n A_n ) = \lim_{n \rightarrow \infty}c( A_n)$. 
Therefore, we obtain: $ \lim_{n \rightarrow \infty}G_X(x_n)=G_X(x)$.  
\end{remark}

 \begin{remark}\label{remark_39}
  Let $c$ be a capacity. 
 % \\ a) for
  \begin{itemize}
      \item[a)] For any $X \in \chi(\F)$, 
    $G_X(x) \geq t$ implies $r^-_{X} (t) \leq x$.
  Indeed, by Remark \ref{remark_44}, % on $r_X^-$, 
  we have: $r_X^-(t)=\inf\{x \in \R :G_X(x) \geq t\}$ for $t\in (0,1)$.
   Hence, if $x \in \R$ is such that $G_X(x) \geq t$, then $r^-_{X} (t) \leq x$.

   \item[b)] Moreover, if $c$ is\textit{ continuous from below}, the other implication also holds true; more precisely, $r^-_{X} (t) \leq x$ implies $G_X(x) \geq t$.
     Indeed, as $c$ is continuous from below, $G_X$ is right-continuous (cf. Remark \ref{remark_41}). 
     As a consequence, we can show that $G_X(r^-_{X} (t)) \geq t$. %as $r^-_{X} (t) < \infty$ we have  $A=\{x:G_X(x) \geq t\}\neq \emptyset $.
   Indeed, let $A:=\{x\in \R:G_X(x) \geq t\}$. Then, by Remark \ref{remark_45}, there exists a sequence $(x_n) \subseteq A$ such that $x_n \downarrow \inf A = r^-_{X} (t)$ and $G_X(x_n) \geq t$. 
      Hence, by the the right-continuity of $G_X$, $t \leq \lim_{n \rightarrow \infty} G_X(x_n)= G_X(r^-_{X} (t))$, which gives $G_X(r^-_{X} (t)) 
      \geq t$.
      Therefore, if 
      $r^-_{X} (t) \leq x$, then by the non-decreasingness of $G_X$ and by the fact that $t \leq G_X(r^-_{X} (t))$  we have $t \leq G_X(r^-_{X} (t)) \leq G_X(x) $.
   \item[c)] If $c$ is\textit{ continuous from below}, then $r_X^-$ is left-continuous. 
Indeed, let $(t_n)$ and $t$ be such that $t_n \uparrow t$.
Then, as $r_X^-$ is non-decreasing, we have: $r_X^-(t_n)$ is non-decreasing and $\lim_{n \rightarrow \infty} r_X^-(t_n) \leq r_X^-(t)$.
\\ We set $z:=\lim_{n \rightarrow \infty} r_X^-(t_n)$. 
It remains to show that $z\geq r_X^-(t)$.
For each $n$, $z=\lim_{k \rightarrow \infty} r_X^-(t_k) \geq r_X^-(t_n)$. 
    Then, by the non-decreasingness of $G_X$ and by the fact that $ G_X(r^-_{X} (t_n)) \geq t_n $ (which follows from the right-continuity of $G_X$ as shown in b)), we have:
    $  G_X(z) \geq  G_X(r^-_{X} (t_n)) \geq t_n $. 
    Therefore, 
    $  G_X(z) \geq t $. 
    Hence, $z\geq r_X^-(t)$ (by Remark \ref{remark_44}).
  \end{itemize}
 \end{remark}

\begin{proof}[Proof of Proposition \ref{prop_39}]
  Let $U:=G_Z(Z)$ (as in Lemma \ref{lemma_38}).
   Let us show $G_X(x)=G_{r_X^-(U)}(x)$ for all $x \in \R$.
   As the capacity $c$ is continuous from below, we use Remark \ref{remark_39}. 
   Hence, we have:
\[
  G_{r_X^-(U)}(x) = 1 - c(r_X^-(U)>x) = 1 - c(U>G_X(x)) = G_U(G_X(x)).
   \]
   By using Lemma \ref{lemma_38}, we obtain $G_U(G_X(x)) = G_X(x)$.
   Hence,  $X$ and $ r_X^-(U)$ have the same distribution function (with respect to $c$).
   \\ 
    Let us show $G_{X+Y}(x)=G_{(r_X^-+r_Y^-)(U)}(x)$ for all $x \in \R$.
    We set $f:=r_X^-+r_Y^-$. %$f(U)=(r_X^-+r_Y^-)(U)$
    We have that $f$ is non-decreasing; 
    also $f$ and $G_U$ do not have common discontinuities (as $G_U$ is continuous by Lemma \ref{lemma_38}). 
    Therefore, (cf., e.g., the arguments of the proof of Proposition 3.2 in \cite{yan2010short}), we have: 
    \[
    G_{(r_X^-+r_Y^-)(U)}(z)=G_{f(U)}(z)= G_U \circ \check{f}(z), 
    \]
    where $\check{f}(z):=\sup \{y:f(y)\leq z\}$ for all $z \in \R$. % it is the upper inverse 
    By Lemma \ref{lemma_38}, %if $\check{f}(z) \in [0,1]$, 
    we have: $G_U(\check{f}(z))=\check{f}(z)$.
    Let us compute 
  $ \check{f}(z) =\sup \{y:(r_X^-+r_Y^-)(y)\leq z\}$.
   As $X$ and $ Y$ are comonotonic measurable functions, it follows by Proposition 1.3.1  in \cite{grigorova:tel-00878599}  
   that $(r_X^-+r_Y^-)(y) =r_{X+Y}^-(y)$ for all $y \in (0,1)$. 
   Hence, we have
   \[ \begin{split}
       \check{f}(z) &=\sup \{y:(r_X^-+r_Y^-)(y)\leq z\}
       =\sup \{y:r_{X+Y}^-(y)\leq z\} 
        =\sup \{y: y\leq G_{X+Y}(z)\}
       \\ &= G_{X+Y}(z),
   \end{split}
   \]
   where we have used Remark \ref{remark_39} (and the continuity from below of the capacity $c$) in the last but one equality.
   Thus,  $X+Y$ and $ (r_X^-+r_Y^-)(U)$ have the same distribution function (with respect to the capacity).
    \\ 
   Let us construct a suitable sequence of non-negative step functions $(X_n)_{n \in \mathbb{N}}$ that converges to $r_X^-(U)$ in $||\cdot||$. 
   For each $n$, let us denote $\Pi_n = \{ b^n_n, \dots, b^n_{1}, b^n_0\}$ a partition of $[0,1]$ with $  0= b^n_{n} \leq  b^n_{n-1} \leq \dots \leq b^n_{1} \leq b^n_{0}=1 $, such that $\max_{1\leq i \leq n} (b^n_{i-1} - b^n_{i}) \rightarrow 0$ as $n\rightarrow \infty$. 
   Let $X_n$ be a step function defined by: $X_n := \sum_{i=1}^n x^n_i \1_{A^n_i} $, where $x^n_i := r_X^-(b^n_i)$
   and  
   \begin{equation}
    A^n_{n}=\{0 \leq U\leq b^n_{n-1} \} \,\,\,\, and \,\,\,\,   A^n_{i}=\{ b^n_{i} < U \leq b^n_{i-1}\} \,\,\,\, for \,\,\,\, i= 1,\dots ,n-1.
 \end{equation}
 For $\epsilon >0$, there exists $\delta=\delta(\epsilon) >0$ and a positive integer $N(\delta)$ such that for all $n \geq N(\delta)$, we have:  
  $b^n_{i-1} - b^n_{i} < \delta$, for $i \in \{1, \cdots,n\}$ 
(since $\max_{1\leq i \leq n} (b^n_{i-1} - b^n_{i}) \rightarrow 0$ as $n \rightarrow \infty$). 
Hence, we have: for all $n \geq N(\delta)$,  
 \[   b^n_{i-1} - \delta < b^n_{i} \leq b^n_{i-1}, \,\, \text{for} \,\,  i \in \{1, \cdots,n\}. \] 
 Let us fix $n \geq N(\delta)$.
  Then, by the monotonicity and the left-continuity properties of $r^-_X(\cdot)$ (cf. Remark \ref{remark_39}), we obtain: for all $i \in \{1, \cdots,n\}$,
 $  
  r^-_X(b^n_{i-1}) -r^-_X(b^n_{i}) < \epsilon. 
  $
   Hence, 
   $\epsilon_n :=\max_{1\leq i \leq n}( r_X^-( b^n_{i-1})-  r_X^-(b^n_{i})) < \epsilon.$
  \\Suppose that $\omega \in A^n_{i}$, 
   that is, $U(\omega) \in (b^n_{i}, b^n_{i-1}]$. 
    Then, as $r_X^-$ is non-decreasing, we have: 
   \[
   r_X^-(b^n_{i}) \leq r_X^-(U(\omega))  \leq  r_X^-( b^n_{i-1}).
   \]
   Hence, we obtain, (for $\omega \in A^n_{i}$):
   \[
   |X_n(\omega)-r_X^-(U(\omega))|= |  r_X^-(b^n_{i}) - r_X^-(U(\omega)) | \leq 
   r_X^-( b^n_{i-1})-  r_X^-(b^n_{i})  \leq \epsilon_n < \epsilon. 
   \]
  Then, for all $\omega \in \Omega$, we get:
  $
  |X_n(\omega)-r_X^-(U(\omega))| \leq \epsilon_n < \epsilon.
  $
  Hence, $
  ||X_n-r_X^-(U)|| < \epsilon.
  $
 Hence, $ \lim_{n \rightarrow \infty} 
 ||X_n-r_X^-(U)|| =0$.
 \\   For the measurable function $r_Y^-(U)$, by using the same techniques (with $r_Y^-$ instead of $r_X^-$ and the same sets $A^n_i$ as before),  we have that there exists a sequence of step functions $Y_n$ of the form $Y_n=\sum_{i=1}^n y^n_i \1_{A^n_i}$, where $y_i^n=r^-_Y(b^n_i)$, such that $ ||Y_n-r_Y^-(U)|| \rightarrow 0$. 
  \\  Moreover, by Lemma \ref{lemma_30} ("only if" direction), we conclude that for each $n$, $X_n$ and $Y_n$ are comonotonic step functions. Indeed, we have $X_n = \sum_{i=1}^n x^n_i \1_{A^n_i} $ and 
    $Y_n = \sum_{i=1}^n y^n_i \1_{A^n_i} $, where  $x^n_{1} \geq x^n_{2}  \geq  \cdots \geq x^n_{n}$, and where $y^n_{1} \geq y^n_{2}  \geq  \cdots \geq y^n_{n}$, and where the ${A^n_i}$'s form a partition of $\Omega$. 
    \\ Furthermore, since $ X $ and $r_X^-(U)$ have the same distribution function (with respect to $c$), we have  by Proposition \ref{prop_38}: $ \EE_{\phi^{\G} \circ c}(X) =  \EE_{\phi^{\G} \circ c}(r_X^-(U)) $. 
   And similarly 
    $  \EE_{\phi^{\G} \circ c}(Y)=  \EE_{\phi^{\G} \circ c}(r_Y^-(U)) $. %, for all $\omega \in \Omega$.
    Moreover, as $X$ and $ Y$ are comonotonic, we have:
 $X+Y$ and $ (r_X^-+r_Y^-)(U)$ have the same distribution function (with respect to $c$).
    Hence, by Proposition \ref{prop_38}, $  \EE_{\phi^{\G} \circ c}(X+Y) =  \EE_{\phi^{\G} \circ c}(r_X^-(U)+r_Y^-(U))$. 
    Combining this with Lemma \ref{lemma_37} and
   Remark \ref{remark_40} gives: for all $\omega \in \Omega$,
  \[
     \EE_{\phi^{\G} \circ c} (X_n)(\omega) \xrightarrow[n \rightarrow \infty]{}   \EE_{\phi^{\G} \circ c}(X)(\omega), \,\,\,\,\,\,\,\,
            \EE_{\phi^{\G} \circ c} (Y_n)(\omega) \xrightarrow[n \rightarrow \infty]{}   \EE_{\phi^{\G} \circ c}(Y) (\omega),
              \]
              and
              \[
             \EE_{\phi^{\G} \circ c} (X_n+Y_n)(\omega) \xrightarrow[n \rightarrow \infty]{}   \EE_{\phi^{\G} \circ c}(X+Y)(\omega).  \]  
             The proof is completed.\qed
             \end{proof} 

We can now proceed to proving Theorem \ref{Theorem_15}. 
The proof relies on  Lemma \ref{lemma_36} and Proposition \ref{prop_39}.
\begin{proof}[Proof of Theorem \ref{Theorem_15}]
We proceed in three steps:
    \begin{itemize}
        \item [\textbf{Step 1.}](Comonotonic additivity for non-negative step functions) We show the equality \eqref{eq156} for non-negative step functions, by using Lemma \ref{lemma_36}. Let $X$ and $Y$ be two non-negative comonotonic step functions. By Lemma \ref{lemma_30} we can represent $X$ and $Y$ as follows: 
        $X=\sum_{i=1}^n x_i\1_{A_i}$ and $Y= \sum_{i=1}^n y_i\1_{A_i}$, where  $x_{1} \geq  \cdots \geq x_{n} \geq0$ and $y_{1} \geq \cdots \geq y_{n}\geq 0$, and the $A_i$'s form a partition of $\Omega$. 
        We obtain: for all $\omega \in \Omega$,
        \[ \begin{split}
         \EE_{\phi^{\G} \circ c}(X+Y)(\omega)
         &=  \EE_{\phi^{\G} \circ c}\left(\sum_{i=1}^n x_i\1_{A_i} + \sum_{i=1}^n y_i\1_{A_i}\right)(\omega)\\
          &=  \EE_{\phi^{\G} \circ c}\left(\sum_{i=1}^n (x_i+ y_i) \1_{A_i}\right)(\omega)
            \\& = \sum_{i=1}^n  ((x_i+ y_i) - ( x_{i+1}+ y_{i+1})) \, \phi^{\G}(\omega, c(\cup_{j=1}^i A_j)),
              \end{split} \]
          where we have used Lemma \ref{lemma_36} Equation \eqref{eq153}.
        \\  We thus have:
          \[ \begin{split}         & \EE_{\phi^{\G} \circ c}(X+Y)(\omega)   = \sum_{i=1}^n  ((x_i - x_{i+1})+( y_i - y_{i+1})) \, \phi^{\G}(\omega, c(\cup_{j=1}^i A_j))
                \\& = \sum_{i=1}^n  ((x_i - x_{i+1}) \, \phi^{\G}(\omega, c(\cup_{j=1}^i A_j)) + \sum_{i=1}^n ( y_i - y_{i+1})) \, \phi^{\G}(\omega, c(\cup_{j=1}^i A_j))
                \\&= \EE_{\phi^{\G} \circ c}\left(\sum_{i=1}^n x_i\1_{A_i}\right) (\omega)+ \EE_{\phi^{\G} \circ c}\left(\sum_{i=1}^n y_i\1_{A_i}\right)(\omega)
                 \\ &= \EE_{\phi^{\G} \circ c}(X) (\omega)+ \EE_{\phi^{\G} \circ c}(Y)(\omega),
          \end{split} \]
         where we have used again Lemma \ref{lemma_36} Equation \eqref{eq153}. 
         We conclude that $ \EE_{\phi^{\G} \circ c}(X+Y) = \EE_{\phi^{\G} \circ c}(X)+ \EE_{\phi^{\G} \circ c}(Y)$.
        \item[\textbf{Step 2.}] (Comonotonic additivity for non-negative measurable functions)
        We prove the desired equality \eqref{eq156} for non-negative comonotonic measurable functions in $\chi(\F)$.  Let $X$ and $Y$ be non-negative comonotonic measurable functions in $\chi(\F)$.  
        By Proposition \ref{prop_39},
       there exist two sequences of non-negative step functions $(X_n)$ and $(Y_n)$ such that $X_n$ and $Y_n$ are comonotonic, for each $n$, and such that, for all  $\omega \in \Omega$,
         \[
     \EE_{\phi^{\G} \circ c} (X_n)(\omega) \xrightarrow[n \rightarrow \infty]{}    \EE_{\phi^{\G} \circ c}(X)(\omega) \,\,; \,\,\,\,
            \EE_{\phi^{\G} \circ c} (Y_n)(\omega) \xrightarrow[n \rightarrow \infty]{}    \EE_{\phi^{\G} \circ c}(Y) (\omega), \,\,;\,\
              \]
              \[
             \EE_{\phi^{\G} \circ c} (X_n+Y_n)(\omega) \xrightarrow[n \rightarrow \infty]{}    \EE_{\phi^{\G} \circ c}(X+Y)(\omega). 
            \]
  As $X_{n}$ and $Y_{n}$ are non-negative comonotonic step functions for each $n$, by using Step 1, we get: $  \EE_{\phi^{\G} \circ c} (X_{n}+Y_{n})(\omega) = \EE_{\phi^{\G} \circ c} (X_{n})(\omega)+ \EE_{\phi^{\G} \circ c} (Y_{n})(\omega) $, for all $\omega \in \Omega$. 
    \\ Therefore, for all $\omega \in \Omega$,
    \[ \begin{split}
    &\EE_{\phi^{\G} \circ c}(X+Y)(\omega) = \lim_{n \rightarrow \infty} \EE_{\phi^{\G} \circ c} (X_{n}+Y_{n})(\omega)
    \\&=   \lim_{n \rightarrow \infty} \EE_{\phi^{\G} \circ c} (X_{n})(\omega)+  \lim_{n \rightarrow \infty} \EE_{\phi^{\G} \circ c} (Y_{n})(\omega)
    = \EE_{\phi^{\G} \circ c}(X)(\omega) + \EE_{\phi^{\G} \circ c}(Y)(\omega).
     \end{split} \]
        \item[\textbf{Step 3.}]  
        (Comonotonic additivity for measurable functions in $\chi(\F)$)
        We prove the desired equality \eqref{eq156} for the general case.
         Let $X$ and $Y$ be  comonotonic measurable functions in $\chi(\F)$. 
         We define the measurable functions $Z_1$ and $Z_2$ by $ Z_1:= X+||X||$, $ Z_2:= Y+||Y||$, respectively.
         Then, $Z_1, Z_2 \geq 0$ and they are also comonotonic measurable functions. 
         It follows from Step 2 that for all $\omega \in \Omega$,
         $
          \EE_{\phi^{\G} \circ c}(Z_1+Z_2)(\omega) = \EE_{\phi^{\G} \circ c}(Z_1)(\omega) + \EE_{\phi^{\G} \circ c}(Z_2)(\omega)$.
         Therefore, by the translation invariance property of $ \EE_{\phi^{\G} \circ c}(\cdot)$, we have: for all $\omega \in \Omega$,
  $  
  \EE_{\phi^{\G} \circ c}(Z_1+Z_2) (\omega)=   \EE_{\phi^{\G} \circ c}(X+Y)(\omega) +||X||+||Y||, 
  $
  and $\EE_{\phi^{\G} \circ c}(Z_1)(\omega) =  \EE_{\phi^{\G} \circ c}(X+||X||) (\omega)
  = \EE_{\phi^{\G} \circ c}(X)(\omega) +||X||$, and
$\EE_{\phi^{\G} \circ c}(Z_2)(\omega) =  \EE_{\phi^{\G} \circ c}(Y+||Y||) (\omega)
  = \EE_{\phi^{\G} \circ c}(Y)(\omega) +||Y||$.
  Hence,
$   \EE_{\phi^{\G} \circ c}(X+Y)(\omega)
  =  \EE_{\phi^{\G} \circ c}(X)(\omega)+ \EE_{\phi^{\G} \circ c}(Y)(\omega)$, for all  $\omega \in \Omega$.
      \end{itemize}
   The proof is completed.\qed
\end{proof}

\section{Randomly distorted Choquet integrals and first-order stochastic dominance with respect to a capacity}\label{sec_representation_first_order}

We complete hereafter the list of properties of the randomly distorted Choquet integral.
\begin{proposition}[Properties] \label{prop_42}
Let $(\Omega, \F)$ be a measurable space.
Let $c$ be a capacity.
Let $\phi^{\G}$ be a $\G$-random distortion function.
Let $X \in \chi(\F) $.
Let us consider $\rho $ defined by:
    \[
\rho(X):=   \EE_{\phi^{\G} \circ c}(X).
\]
Then, $\rho$ is a $\G$-conditional risk measure valued in $\chi(\G)$ satisfying the following properties:
    \begin{itemize}
 \item [(i)] (Consistency in the first-order stochastic dominance with respect to the capacity $c$) If $X, Y \in  \chi(\F) $ are such that $ X \preceq_{st,c} Y $. Then,
  $\rho(X) \leq \rho(Y)$.

       \item [(ii)](Comonotonic additivity) 
Under Assumption \ref{assumption_Uniform_capacity}, 
if $c$ is continuous from below capacity, and $X$ and $Y$ are comonotonic, then $  \rho \left( X + Y \right) =  \rho \left( X \right) + \rho \left( Y \right) $.
        \item [(iii)] (Monotonicity) If $X, Y \in \chi(\F)$ are such that $ X \leq Y $. Then, $  \rho(X) \leq \rho(Y)$.
          \item [(iv)](Normalisation) $\rho(1) =1$. 
       \item [(v)] (Translation invariance)  For $a \in \R$, we have: $  \rho(a+X) = a + \rho(X)$.
       \item[(vi)](Positive homogeneity) For $a \geq 0$, we have: $  \rho(aX) = a \rho(X)$.
    \end{itemize}
\end{proposition}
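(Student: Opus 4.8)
The plan is to verify the six properties one at a time, drawing on the results already established for $\EE_{\phi^{\G} \circ c}$. The claim that $\rho$ takes values in $\chi(\G)$ was already recorded right after Definition \ref{defn3}. Three of the six items are immediate restatements of earlier results and require no additional work beyond a citation: property (ii) is exactly Theorem \ref{Theorem_15} (invoked under Assumption \ref{assumption_Uniform_capacity} and continuity from below), property (iii) is Proposition \ref{prop_38}(ii), and property (v) is Proposition \ref{prop_38}(iii).

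For property (i), I would use Lemma \ref{charc_1_c}(i): the hypothesis $X \preceq_{st,c} Y$ yields $G_X(x) \geq G_Y(x)$ for every $x \in \R$, which by the definition $G_X(x) = 1 - c(X > x)$ is equivalent to $c(X > x) \leq c(Y > x)$ for every $x$. Since $t \mapsto \phi^{\G}(\omega, t)$ is non-decreasing, this gives $\phi^{\G}(\omega, c(X > x)) \leq \phi^{\G}(\omega, c(Y > x))$ pointwise in $(\omega, x)$, and substituting into the two integrals of Definition \ref{defn3} yields $\rho(X)(\omega) \leq \rho(Y)(\omega)$ for all $\omega$. This is the same mechanism as the monotonicity proof in Proposition \ref{prop_38}(ii), now fed by the distributional inequality coming from stochastic dominance.

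For property (iv), a direct computation suffices: for $X \equiv 1$ one has $c(1 > x) = 1$ when $x < 1$ and $c(1 > x) = 0$ when $x \geq 1$, so by the normalisation of $\phi^{\G}$ (namely $\phi^{\G}(\omega, 0) = 0$ and $\phi^{\G}(\omega, 1) = 1$) the positive-axis integral reduces to $\int_0^1 1\, dx = 1$ and the negative-axis integral vanishes, giving $\rho(1) = 1$. For property (vi) there are two clean routes. The first is a change of variables $t = x/a$ in Definition \ref{defn3}, using $c(aX > x) = c(X > x/a)$ for $a > 0$ (with the case $a = 0$ checked separately, where $\rho(0) = 0$), which gives $\rho(aX) = a\,\rho(X)$ for \emph{any} capacity. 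The second is to invoke Remark \ref{remark_30}: having established comonotonic additivity (ii), Lipschitz continuity (Lemma \ref{lemma_37}) and normalisation (iv), positive homogeneity follows automatically --- though this route inherits the extra hypotheses needed for (ii).

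There is no serious obstacle in this proposition, as every item reduces to an earlier result or to an elementary computation. The only points requiring genuine care are the correct passage from stochastic dominance to the pointwise survival-function inequality in (i) via Lemma \ref{charc_1_c}, and the choice of argument for (vi): I would favour the assumption-free change-of-variables route over the appeal to Remark \ref{remark_30}, so as not to import the continuity-from-below and Assumption \ref{assumption_Uniform_capacity} hypotheses that are unnecessary for homogeneity.
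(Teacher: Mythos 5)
Your proof is correct and follows essentially the same route as the paper's: items (ii), (iii), (v) by citing Theorem \ref{Theorem_15} and Proposition \ref{prop_38}, item (i) via Lemma \ref{charc_1_c} combined with the monotonicity of $\phi^{\G}$ in its second argument, item (iv) by direct computation, and item (vi) by the same change-of-variables argument (your explicit check of the case $a=0$, where $\rho(0)=0$, is a minor refinement the paper leaves implicit, and your preference for the assumption-free route over Remark \ref{remark_30} matches the paper's choice).
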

\begin{proof}
    Let us prove (i).  Let $X, Y \in  \chi(\F)$ be such that $ X \preceq_{st,c} Y $. Then, for each $x\in \mathbb{R}$, we have: $c(X > x)  \leq c(Y > x)$ (by using Definition \ref{def_CDF_G} and Lemma \ref{charc_1_c}).    
      As $\phi$ is non-decreasing in the second argument, we obtain: for each $\omega \in \Omega$,   for each $x\in \mathbb{R}$,    
     $ \phi^{\G} \left( \omega, c(X > x) \right) \leq \phi^{\G} \left( \omega, c(Y > x) \right)$.  
      Hence,   
      \[ \begin{split}     \int_{0}^{+\infty} \phi^{\G} \left( \omega, c(X > x )\right) dx + \int_{-\infty}^{0} \left[ \phi^{\G} \left( \omega, c(X > x) \right) -1 \right] dx     \\ \leq         \int_{0}^{+\infty} \phi^{\G} \left( \omega, c(Y > x) \right) dx + \int_{-\infty}^{0} \left[ \phi^{\G} \left( \omega, c(Y > x) \right) -1 \right] dx. \end{split}    \]  
      Therefore,  $\rho(X)(\omega)=  \EE_{\phi^{\G} \circ c}(X)(\omega) \leq  \EE_{\phi^{\G}  \circ c}(Y)(\omega)= \rho(Y)(\omega)$.\\  
%   \item[(ii)] cf. Theorem \ref{Theorem_15}.
 The proof of (ii) was done in   Theorem \ref{Theorem_15}. For the proof of 
     (iii), we refer to our Proposition \ref{prop_38}.
       For (iv), by a direct computation (using that $c(\Omega)=1$), we have: $ \EE_{\phi^{\G} \circ c}(1_{\Omega})(\omega) = \phi^{\G} \left( \omega, 1 \right)= 1$, for all $\omega \in \Omega$.\\
      For the proof of (v),  we refer to  Proposition \ref{prop_38}.\\
      Let us prove (vi).  
      For $a > 0$, by Definition \ref{defn3}, we obtain: for all $\omega \in \Omega$, 
      \[\begin{split}
     &\rho(aX)(\omega) 
     = \int_{0}^{+\infty} \phi^{\G} \left( \omega, c(aX > x) \right)dx + \int_{-\infty}^{0} [\phi^{\G} \left( \omega, c(aX > x) \right) -1] dx
     \\&= 
           a \left[  \int_{0}^{+\infty} \phi^{\G} \left( \omega, c(X > z) \right)dz + \int_{-\infty}^{0} [\phi^{\G} \left( \omega, c(X > z) \right) -1] dz \right]
        = 
       a \rho(X)(\omega), \end{split}\]
        where we have used a change of variables. \qed
\end{proof}

We are now interested in conditional risk measures which can be represented as $\G$-randomly distorted Choquet integrals with respect to a $\G$-randomly distorted capacity. For the case where $\phi^{\G}$ is deterministic, the representation result \eqref{eq155} hereafter
can be found in \cite{grigorova2014risk} (Lemma 3.3).

\begin{theorem}[Representation in terms of $\G$-randomly distorted Choquet integrals]\label{Theorem_14}
 Let $(\Omega, \F)$ be a measurable space. 
 Let $c$ be a continuous from below capacity. 
 We assume Assumption \ref{assumption_Uniform_capacity}.
  Let $\rho: \chi(\F) \rightarrow  \chi(\G)$  be a conditional risk measure satisfying the following properties: 
 \begin{itemize}
   \item [(i)] Monotonicity (consistency) in the first-order stochastic dominance with respect to the capacity $c$: If $ X \preceq_{st,c} Y $, then $  \rho \left( X \right) \leq  \rho \left( Y \right) $.
    \item [(ii)] Comonotonic additivity: If $X$ and $Y$ are comonotonic, then $  \rho \left( X + Y \right) =  \rho \left( X \right) + \rho \left( Y \right) $.
  \item [(iii)] Translation invariance:  For $a \in \R$, we have: $  \rho(X+a) = \rho(X)+ a$.
       \item[(iv)] Positive homogeneity: For $a \geq 0$, we have: $  \rho(aX) = a \rho(X)$.
 \end{itemize}
Then, there exists a $\G$-random distortion function $\phi^{\G}$ such that 
\begin{equation}\label{eq155}
  \rho \left( X \right) 
   =  \EE_{\phi^{\G} \circ c}(X), \,\,\, \text{for all} \,\, X \in  \chi(\F).
\end{equation}
\end{theorem}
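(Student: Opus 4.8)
The plan is to construct the $\G$-random distortion function $\phi^{\G}$ directly from the action of $\rho$ on indicator functions, and then to propagate the identity $\rho=\EE_{\phi^{\G}\circ c}$ from indicators to non-negative step functions (via comonotonic additivity and positive homogeneity), then to non-negative functions in $\chi(\F)$ (via uniform approximation and Lipschitz continuity), and finally to all of $\chi(\F)$ (via translation invariance). First I would record several facts that follow from the hypotheses. Taking $a=0$ in (iv) gives $\rho(0)=0$, and then (iii) yields the normalisation $\rho(1)=1$. Property (i) upgrades to genuine monotonicity and to ``distribution invariance'' in the sense of Definition \ref{defn5}: if $X\le Y$ pointwise then $u(X)\le u(Y)$ for every non-decreasing $u$, so by monotonicity of the Choquet integral $X\preceq_{st,c}Y$ and hence $\rho(X)\le\rho(Y)$; and if $G_X=G_Y$ then both $X\preceq_{st,c}Y$ and $Y\preceq_{st,c}X$ hold (equal distribution functions yield mutual first-order dominance, cf.\ the characterisation underlying Lemma \ref{charc_1_c}), whence $\rho(X)=\rho(Y)$. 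In particular $\rho$ is monotone and cash invariant, so by Remark \ref{remark_32} it is Lipschitz for $\|\cdot\|$.

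Next, using Assumption \ref{assumption_Uniform_capacity} and Lemma \ref{lemma_38}, I would set $U:=G_Z(Z)$ and $B_t:=\{U>1-t\}$ for $t\in[0,1]$, so that $c(B_t)=1-G_U(1-t)=t$ and the family $(B_t)$ is nested increasing in $t$, with $B_0=\emptyset$ and $c(B_1)=1$. I then define
\[
\phi^{\G}(\omega,t):=\rho(\1_{B_t})(\omega),\qquad \omega\in\Omega,\ t\in[0,1].
\]
Checking that this is a $\G$-random distortion function is routine: $\G$-measurability in $\omega$ holds because $\rho$ is $\chi(\G)$-valued; monotonicity in $t$ follows from $B_{t_1}\subseteq B_{t_2}$ for $t_1\le t_2$ and the monotonicity of $\rho$; and normalisation holds since $\phi^{\G}(\cdot,0)=\rho(0)=0$, while $\phi^{\G}(\cdot,1)=\rho(\1_{B_1})=\rho(1)=1$ because $\1_{B_1}$ and the constant $1$ share the same distribution function and $\rho$ is distribution invariant. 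By distribution invariance again, $\rho(\1_A)=\phi^{\G}(\cdot,c(A))$ for \emph{every} $A\in\F$, since $\1_A$ and $\1_{B_{c(A)}}$ have the same distribution function; and a direct computation (equivalently, Lemma \ref{lemma_36} with a single step) gives $\EE_{\phi^{\G}\circ c}(\1_A)=\phi^{\G}(\cdot,c(A))$. Thus the two operators agree on indicators.

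The core step is the extension to non-negative step functions. For $X=\sum_{i=1}^n x_i\1_{A_i}$ with $x_1\ge\cdots\ge x_n\ge0$ and $(A_i)$ a partition, I would rewrite $X=\sum_{i=1}^n (x_i-x_{i+1})\1_{C_i}$, where $C_i:=\cup_{k=1}^i A_k$ and $x_{n+1}:=0$. The indicators $\1_{C_i}$ are nested, hence pairwise comonotonic, and an induction based on comonotonic additivity together with positive homogeneity yields
\[
\rho(X)=\sum_{i=1}^n (x_i-x_{i+1})\,\rho(\1_{C_i})=\sum_{i=1}^n (x_i-x_{i+1})\,\phi^{\G}(\cdot,c(C_i)),
\]
which is exactly $\EE_{\phi^{\G}\circ c}(X)$ by Lemma \ref{lemma_36}. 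For a general non-negative $X\in\chi(\F)$, I would approximate it uniformly by non-negative step functions (e.g.\ dyadic) and use the Lipschitz continuity of $\rho$ and of $\EE_{\phi^{\G}\circ c}$ (Lemma \ref{lemma_37}, Remark \ref{remark_40}) to pass to the limit. Finally, for arbitrary $X\in\chi(\F)$, applying the non-negative case to $X+\|X\|\ge0$ and invoking the translation invariance of both operators (hypothesis (iii) and Proposition \ref{prop_38}(iii)) gives $\rho(X)=\EE_{\phi^{\G}\circ c}(X)$.

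The main obstacle I anticipate is the justification that comonotonic additivity, stated only for pairs, propagates to the finite sum $\sum_i (x_i-x_{i+1})\1_{C_i}$: one must verify that each partial sum $\sum_{i\le k}(x_i-x_{i+1})\1_{C_i}$ remains comonotonic with the next term $\1_{C_{k+1}}$, which holds because all these functions are non-decreasing functions of the common nested structure of the $C_i$ (cf.\ Lemma \ref{lemma_30}). A secondary subtlety is that the representation of $\rho$ on indicators through $\phi^{\G}$ genuinely relies on distribution invariance, and hence on property (i); without it, $\rho(\1_A)$ need not depend on $A$ only through $c(A)$, and the construction of $\phi^{\G}$ would fail.
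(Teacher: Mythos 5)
Your proposal is correct, and its overall architecture coincides with the paper's: you build $\phi^{\G}(\omega,t):=\rho(\1_{\{U>1-t\}})(\omega)$ from Lemma \ref{lemma_38}, use the fact that hypothesis (i) forces ``distribution invariance'' of $\rho$ (this is exactly the paper's Remark \ref{remark_42}, so you share this step, and its reliance on equal distribution functions yielding mutual $\preceq_{st,c}$-dominance, with the paper) to identify $\rho$ with $\EE_{\phi^{\G}\circ c}$ on all indicators, pass to non-negative step functions via the decomposition $X=\sum_i(x_i-x_{i+1})\1_{C_i}$ with nested $C_i$, comonotonic additivity, positive homogeneity and Lemma \ref{lemma_36}, and conclude by translation invariance. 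The one genuinely different step is the extension from step functions to arbitrary non-negative $X\in\chi(\F)$. The paper goes through Proposition \ref{prop_39}: it approximates $r_X^-(U)$ (not $X$ itself) uniformly by step functions built from the lower quantile function, and then transfers the identity back to $X$ using the ``distribution invariance'' of both operators; this is precisely where continuity from below of $c$ enters its argument. You instead approximate $X$ itself uniformly by dyadic step functions and invoke only the Lipschitz continuity of $\rho$ (monotone plus cash invariant, Remark \ref{remark_32}) and of $\EE_{\phi^{\G}\circ c}$ (Lemma \ref{lemma_37}). Your route is more elementary and self-contained: it avoids the quantile machinery of Proposition \ref{prop_39} entirely, needs neither continuity from below nor distribution invariance at that stage, and is available because here one approximates a single function rather than a comonotonic pair (which is what forces the paper to use Proposition \ref{prop_39} in Theorem \ref{Theorem_15}, and which the paper then simply reuses). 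Your anticipated obstacle --- that pairwise comonotonic additivity must be propagated by induction to the sum $\sum_i(x_i-x_{i+1})\1_{C_i}$, checking that each partial sum is comonotonic with the next indicator --- is a real point that the paper glosses over with ``by the comonotonic additivity and positive homogeneity of $\rho$''; your justification via the nested structure of the $C_i$ is valid.
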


\begin{remark}\label{remark_25}
We recall that any $\rho: \chi(\F) \rightarrow  \chi(\G)$ satisfying the translation invariance and the positive homogeneity is normalised  ($\rho(1)=1$) and grounded ($\rho(0)=0$).  % from iii and iv or from ii and iii
Indeed, by the translation invariance invariance, we have: $\rho(2)=\rho(1+1)=\rho(1)+1. $ By the positive homogeneity, we have:
$\rho(2)=2\rho(1). $  Hence,  $\rho(1)=1.$  Using this, and the translation invariance again, gives: $1=\rho(1)=\rho(0+1)=\rho(0)+1.$ Hence, $\rho(0)=0.$
\end{remark}
\begin{remark} \label{remark_42}
    By the above theorem, for $X=\1_A$, where $A \in \F$, we have: for all $\omega \in \Omega$, $\rho( \1_A ) (\omega)
   =  \EE_{\phi^{\G} \circ c}(\1_A)(\omega) = \phi^{\G}(\omega,c(A))$. 
  Moreover, due to Assumption \ref{assumption_Uniform_capacity} and Lemma \ref{lemma_38}, we have: for each $t \in [0,1]$, there exists $A \in \F$ such that $c(A)=t$, and in this case $  \rho( \1_A )(\omega) =  \phi^{\G}(\omega,t)$, for all $\omega \in \Omega$. 
    In other words, under Assumption \ref{assumption_Uniform_capacity}, $\rho$ induces a random distortion function $\phi^{\G}$. 
    Furthermore, %$\phi^{\G}$ in the representation \eqref{eq150} 
   $\rho$ uniquely induces $\phi^{\G}$.
    Indeed, the consistency in the first-order stochastic dominance (with respect to the capacity $c$) implies the "distribution invariance" with respect to $c$ (cf. Definition \ref{defn5}).
    Hence, for each $t \in [0,1]$, if $A,B \in \F$ are such that $c(A)=c(B)=t$, then by the "distribution invariance" of $\rho$, we get $\rho(\1_A)(\omega)=\rho(\1_B)(\omega)= \phi^{\G}(\omega,t)$, for all $\omega \in \Omega$.
\end{remark}

\begin{proof}[Proof of Theorem \ref{Theorem_14}]
     Let $t \in [0,1]$. 
     There exists $A \in \F$ such that $c(A)=t$. 
         Indeed, if we set $A:=\{U>1-t\}$, where $U:=G_Z(Z)$, then, according to Lemma \ref{lemma_38}, we have: $c(A)=c(U>1-t)=1-G_U(1-t)= 1- (1-t)=t$. 
    We define $\phi^{\G}$ as follows (cf. Remark \ref{remark_42}):  for all $ \omega \in \Omega$,
    \begin{equation}\label{eq152} 
         \phi^{\G}(\omega,t)=\phi^{\G}(\omega,c(A))= \rho(\1_A)(\omega).
    \end{equation}
    % t is arbitrary
    It is easy to check that $\phi^{\G}$ is a $\G$-random distortion function. 
    Indeed, for each $t \in [0,1]$,
    $\phi^{\G}(\cdot,t)$ is $\G$-measurable (as $\rho(\1_A)$ is $\G$-measurable). 
    Moreover, for $t=0$, for all $\omega \in \Omega$, $\phi^{\G}(\omega,0)=\phi^{\G}(\omega,c(\emptyset))= \rho(\1_{\emptyset})(\omega)=\rho(0)(\omega)=0$ (as $c(\emptyset)=0$ and as $\rho$ is grounded). 
    Similarly, for $t=1$, for all $\omega \in \Omega$, $\phi^{\G}(\omega,1)=\phi^{\G}(\omega,c(\Omega))= \rho(\1_{\Omega})(\omega)=\rho(1)(\omega)=1$ (as $c(\Omega)=1$ and as $\rho$ is normalised). 
    Let $0 \leq s \leq t \leq 1$. 
    There exist $A \in \F$ and  $B \in \F$  such that $B \subseteq A$, $c(B)=s$ and $c(A)=t$. 
    Indeed, by setting $B:=\{U>1-s\}$ and $A:=\{U>1-t\}$, 
    where $U:=G_Z(Z)$ (as Lemma \ref{lemma_38}), we have that $B \subseteq A$. 
    Moreover, applying Lemma \ref{lemma_38}, we have: $c(B)=s$ and $c(A)=t$. 
    Hence, for all $\omega \in \Omega$,
    \[
    \phi^{\G}(\omega,s)=\phi^{\G}(\omega,c(B))= \rho(\1_B)(\omega) 
    \leq  \rho(\1_A)(\omega)=\phi^{\G}(\omega,c(A))= \phi^{\G}(\omega,t), 
    \]
    where we have used the monotonicity of $\rho$. Hence, $\phi^{\G}$ is a $\G$-random distortion function. 
     \\
     It follows from Property (i) that $\rho$ is "distribution invariant". Hence, $\phi^{\G}$ is well-defined and in fact is unique (cf. Remark \ref{remark_42}). 
    The sequel of the proof is divided in four steps:
    \begin{itemize}
    \item [\textbf{Step 1.}] For $X = \1_{A}$, where $A \in \F$, we have, by using \eqref{eq152}, 
     \[
   \rho(\1_A)(\omega) =\phi^{\G}(\omega,c(A))= \EE_{\phi^{\G}\circ c}(\1_A)(\omega),
   \,\, \text{for all} \,\, \omega \in \Omega.
    \]
    \item [\textbf{Step 2.}] Let $X$ be a non-negative step function of the form  $X = \sum_{i=1}^n x_i\1_{A_i}$,  
    where $x_1 \geq x_2 \geq \cdots \geq x_n \geq 0$ and $ A_i \in \F$ such that the $A_i$'s form a partition of $\Omega$. 
    Then, $X$ can be rewritten as $X = \sum_{i=1}^n \Bar{x}_i \1_{\Bar{A}_i}$,
       where $ \Bar{x}_i:= x_i - x_{i+1},  \, x_{n+1} := 0$, and $ \Bar{A}_i := \cup_{k=1}^i A_k$. 
    By the comonotonic additivity and positive homogeneity of $\rho$, we have: for all $\omega \in \Omega$,
    \[
    \rho(X)(\omega)
    =\rho \left(\sum_{i=1}^n \Bar{x}_i\1_{\Bar{A}_i}\right)(\omega)
    =\sum_{i=1}^n \Bar{x}_i \rho(\1_{\Bar{A}_i})(\omega).
    \]
    On the other hand, by Lemma \ref{lemma_36}, 
\[
 \EE_{\phi^{\G} \circ c}(X)(\omega)
 = \sum_{i=1}^n  \Bar{x}_i \, \phi^{\G}(\omega, c(\Bar{A}_i)), \,\, \text{for all} \,\, \omega \in \Omega.
\]
By Step 1, for each $i$, $\phi^{\G}(\omega,c(A_i))=\rho(\1_{A_i})(\omega)$, for all $ \omega \in \Omega$. Hence, 
\[ 
\rho(X)(\omega) = \rho \left(\sum_{i=1}^n \Bar{x}_i\1_{\Bar{A}_i}\right)(\omega)
= \sum_{i=1}^n  \Bar{x}_i \, \phi^{\G}(\omega, c(\Bar{A}_i))
= \EE_{\phi^{\G} \circ c}(X) (\omega), \,\, \text{for all} \, \omega \in \Omega.
\]

  \item [\textbf{Step 3.}] 
  Let $X$ be a non-negative measurable function in $\chi(\F)$. 
  As $c$ is continuous from below, we apply Proposition \ref{prop_39}.
By Proposition \ref{prop_39}, there exists a sequence of non-negative step functions $(X_n)$ such that $X_n \rightarrow r_X^-(U)$ in $||\cdot||$. Moreover, $X$ and $r_X^-(U)$ have the same distribution. 
  We apply Lemma \ref{lemma_37} and Remark \ref{remark_32}, and the "distribution invariance" of $\rho(\cdot)$ and $\EE_{\phi^{\G} \circ c}(\cdot)$. 
  \\ By Lemma \ref{lemma_37} (the Lipschitz continuity of $\EE_{\phi^{\G} \circ c}(\cdot)$) and the "distribution invariance" of $\EE_{\phi^{\G} \circ c}(\cdot)$, we have: $ \EE_{\phi^{\G} \circ c} (X_n) \xrightarrow[n \rightarrow \infty]{}   \EE_{\phi^{\G} \circ c}(X)$ in $||\cdot||$. 
  Hence, 
\begin{equation}\label{eq157}
  \EE_{\phi^{\G} \circ c} (X_n)(\omega) \xrightarrow[n \rightarrow \infty]{}    \EE_{\phi^{\G} \circ c}(X)(\omega),\,\, \text{for all} \,\, \omega \in \Omega.
\end{equation}
 By Remark \ref{remark_32}, we have that $\rho$ is Lipschitz continuous. 
 Hence, using the distribution-invariance of $\rho$ (and the fact that $ r_X^-(U)$ and $X$ have the same distribution), we have
  $ \rho(X_n) \xrightarrow[n \rightarrow \infty]{}   \rho (X)$ in $||\cdot||$.
  Therefore, 
\begin{equation}\label{eq158}
  \rho (X_n)(\omega) \xrightarrow[n \rightarrow \infty]{}    \rho(X)(\omega), \,\, \text{for all} \,\, \omega \in \Omega.
  \end{equation}
  On the other hand, by Step 2, we have: $\rho(X_n)(\omega)=\EE_{\phi^{\G} \circ c} (X_n)(\omega)$, for all $\omega \in \Omega$, for each $n$.
  Using this, and equations \eqref{eq157} and \eqref{eq158}, we get: 
  $\rho (X)(\omega)= \EE_{\phi^{\G} \circ c}(X)(\omega)$, for all $\omega \in \Omega$.
  \item [\textbf{Step 4.}]
   Let $X \in \chi(\F)$. We define $ Y:= X+||X||$, then $Y \geq 0$. It follows from Step 3 that 
$    \rho (Y)(\omega)= \EE_{\phi^{\G} \circ c}(Y)(\omega)$, for all $\omega \in \Omega$.
By the translation invariance property of $\rho(\cdot)$ and $\EE_{\phi^{\G} \circ c}(\cdot)$, we get the desired result 
 $  \rho (X)(\omega) = \EE_{\phi^{\G} \circ c}(X)(\omega) $, for all $\omega \in \Omega$. \qed
 \end{itemize}
 \end{proof}

\section{Concave $\G$-random distortion functions and the stop-loss stochastic dominance with respect to a capacity}\label{sec_representation_stop_loss}

In this section, we first recall the definition of the increasing convex stochastic ordering (with respect to a capacity $c$) and 
the definition of the stop-loss ordering (with respect to a capacity $c$) (cf. \cite{grigorova2014stochastic}).

 \begin{definition} \label{def_2stoch_domin_c} 
Let $X$ and $Y$ be in $\chi(\F)$ and let $c$ be a capacity. We say that $X$ is dominated by  $Y$ in the increasing convex stochastic dominance with respect to the capacity $c$, denoted by $ X \preceq_{icx,c} Y $ if :
\begin{equation}
      \mathbb{E}_c \left( u\left( X \right ) \right ) \leq \mathbb{E}_c \left( u\left( Y \right )  \right ) 
\end{equation}
for all deterministic functions $u: \mathbb{R} \rightarrow \mathbb{R} $ which are non-decreasing and convex.
\end{definition}

 \begin{definition} \label{def_stop_loss_domin_c} 
Let $X$ and $Y$ be in $\chi(\F)$ and let $c$ be a capacity.
We say that $X$ is dominated by  $Y$ in the stop-loss ordering with respect to the capacity $c$, denoted by $ X \preceq_{sl,c} Y $ if :
\begin{equation}
     \mathbb{E}_c \left(\left(X-b \right)^+ \right) \leq  \mathbb{E}_c \left(\left(Y-b \right)^+ \right), 
      \,\, \text{for all} \,\, b \in \mathbb{R}.
\end{equation}
\end{definition}

The increasing convex stochastic dominance (with respect to a capacity) implies the stop-loss stochastic dominance (with respect to a capacity).
The following characterisation of the stop-loss ordering relation with respect to a capacity can be found in \cite{grigorova2014stochastic}. % Proposition 3.7, 3.8, 3.10
  \begin{proposition}  \label{Charc_2orderings_c}
     Let $c$ be a capacity. 
      Let $X$ and $Y$ be in $\chi(\F)$. 
      The following statements are equivalent: 
 \begin{enumerate}
  \item[(i)]  
   $  \mathbb{E}_c \left( \left( X-b \right )^+ \right ) \leq  \mathbb{E}_c \left( \left( Y-b \right )^+  \right )$
     for all $b \in \mathbb{R}$.
  
   \item[(ii)]  $ \int_{x}^{+\infty} \left( 1-G_X\left( z \right ) \right ) dz  \leq \int_{x}^{+\infty} \left( 1-G_Y\left( z \right ) \right ) dz $
 for all $x \in \mathbb{R}$.
 
\item[(iii)]  $ \int_{\alpha}^{1} r_X\left( u \right ) du  \leq \int_{\alpha}^{1}r_Y\left( u \right )  du $ 
for all $\alpha \in \left( 0 , 1 \right) $. 

    \item[(iv)] 
 For all $g: [0,1] \rightarrow \R^+$ integrable and non-decreasing functions, we have:
   \[ \int_{0}^{1} g(t) r_X(t)dt  \leq \int_{0}^{1} g(t)r_Y(t)dt. \]
     \end{enumerate}
  \end{proposition}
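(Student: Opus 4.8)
The plan is to prove the four-fold equivalence through the chain (i)$\Leftrightarrow$(ii)$\Leftrightarrow$(iii)$\Leftrightarrow$(iv), turning the two probabilistic-looking conditions into an elementary analytic statement about the non-decreasing quantile functions $r_X,r_Y$ on $(0,1)$. First I would dispose of (i)$\Leftrightarrow$(ii) by a direct computation. Since $(X-b)^+\geq 0$, the negative part of the Choquet formula vanishes and $\EE_c\bigl((X-b)^+\bigr)=\int_0^{+\infty}c\bigl((X-b)^+>x\bigr)\,dx$. For $x>0$ one has $\{(X-b)^+>x\}=\{X>b+x\}$, so the substitution $z=b+x$ together with $c(X>z)=1-G_X(z)$ yields the master formula $\EE_c\bigl((X-b)^+\bigr)=\int_b^{+\infty}(1-G_X(z))\,dz$. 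Thus (i) at level $b$ is literally (ii) at $x=b$.

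The heart of the argument, and the step I expect to be the main obstacle, is the identity $\int_x^{+\infty}(1-G_X(z))\,dz=\int_0^1 (r_X(u)-x)^+\,du$, which I would derive by Tonelli after writing $(r_X(u)-x)^+=\int_x^{+\infty}\1_{\{z<r_X(u)\}}\,dz$; this reduces the matter to computing the Lebesgue measure $\mu(z):=\bigl|\{u\in(0,1):r_X(u)>z\}\bigr|$. The delicate point is that $c$ is merely a capacity, so $G_X$ need not be right-continuous and $r_X^-$ need not be left-continuous, and I may invoke neither Assumption~\ref{assumption_Uniform_capacity} nor continuity from below. I would handle this as follows. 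By Remark~\ref{remark_39}(a), valid for \emph{every} capacity, $r_X^-(u)>z$ forces $G_X(z)<u$, so $\{u:r_X^-(u)>z\}\subseteq(G_X(z),1)$ and hence $\mu(z)\leq 1-G_X(z)$ always. At each continuity point $z_0$ of $G_X$ I would verify the reverse inclusion by hand: if $u>G_X(z_0)$ then monotonicity of $G_X$ rules out $r_X^-(u)<z_0$, while continuity of $G_X$ at $z_0$ rules out $r_X^-(u)=z_0$, so $\mu(z_0)=1-G_X(z_0)$. As the monotone function $G_X$ has at most countably many discontinuities, $\mu(z)=1-G_X(z)$ for a.e.\ $z$; and since all quantile functions coincide a.e.\ (being sandwiched between $r_X^-$ and $r_X^+$, which differ on a countable set), the integral identity holds for every $x$. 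This rewrites (ii) as $\Phi_X(x)\leq\Phi_Y(x)$ for all $x$, where $\Phi_X(x):=\int_0^1(r_X(u)-x)^+\,du$.

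It then remains to relate $\Phi_X$ to $\Psi_X(\alpha):=\int_\alpha^1 r_X(u)\,du$, which is now elementary. The key are the two conjugate (Legendre-type) identities
\[
\Psi_X(\alpha)=\inf_{x\in\R}\bigl(\Phi_X(x)+x(1-\alpha)\bigr),\qquad \Phi_X(x)=\sup_{\alpha\in(0,1)}\bigl(\Psi_X(\alpha)-x(1-\alpha)\bigr),
\]
where both extrema are attained: the first at any quantile value $x=r_X(\alpha)$ (for $u<\alpha$ then $(r_X(u)-x)^+=0$ and for $u>\alpha$ then $(x-r_X(u))^+=0$, by monotonicity of $r_X$), the second at the threshold $\alpha=\inf\{u:r_X(u)>x\}$. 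Since taking $\inf_x$ (respectively $\sup_\alpha$) of a family ordered pointwise preserves the order, the first identity gives (ii)$\Rightarrow$(iii) and the second gives (iii)$\Rightarrow$(ii), completing (ii)$\Leftrightarrow$(iii).

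Finally I would establish (iii)$\Leftrightarrow$(iv). The implication (iv)$\Rightarrow$(iii) is immediate on choosing $g=\1_{[\alpha,1]}$, which is non-negative, non-decreasing and integrable. For (iii)$\Rightarrow$(iv) I would represent a non-decreasing integrable $g\geq 0$ as $g(t)=g(0^+)+\nu_g\bigl((0,t]\bigr)$ with $\nu_g$ a non-negative Lebesgue--Stieltjes measure, so that Tonelli (legitimate since $r_X$ is bounded and $g$ integrable) gives $\int_0^1 g(t)\,r_X(t)\,dt=g(0^+)\,\Psi_X(0)+\int_{(0,1]}\Psi_X(s)\,\nu_g(ds)$. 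Because $g(0^+)\geq 0$, $\nu_g\geq 0$, and $\Psi_X(s)\leq\Psi_Y(s)$ for every $s\in[0,1]$ (from (iii), extended to $s=0$ by letting $\alpha\downarrow 0$ and using boundedness of the quantiles, and trivially at $s=1$), the inequality (iv) follows term by term.
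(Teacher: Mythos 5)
The paper does not actually prove Proposition~\ref{Charc_2orderings_c}: it recalls the statement and defers its proof to the cited reference \cite{grigorova2014stochastic}, so there is no in-paper argument to compare yours against. Judged on its own, your proof is correct and self-contained, and it is careful exactly where care is needed. The computation $\EE_c\bigl((X-b)^+\bigr)=\int_b^{+\infty}(1-G_X(z))\,dz$ follows at once from the definition of the Choquet integral, giving (i)$\Leftrightarrow$(ii). The crux is your identity $\int_x^{+\infty}(1-G_X(z))\,dz=\int_0^1(r_X(u)-x)^+\,du$: for a bare capacity $G_X$ need not be right-continuous, so the Lebesgue measure of $\{u\in(0,1):r_X(u)>z\}$ can be identified with $1-G_X(z)$ only at continuity points of $G_X$, hence almost everywhere; you correctly observe that this a.e.\ identity (together with the fact that all quantile functions of $X$ agree a.e.) is all the Tonelli computation needs, and your two inclusions rest only on Remark~\ref{remark_44} and Remark~\ref{remark_39}(a), which the paper states for arbitrary capacities. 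The Legendre-type duality between $\Phi_X$ and $\Psi_X$ then transfers the pointwise order in both directions, and the Lebesgue--Stieltjes decomposition of $g$ gives (iii)$\Rightarrow$(iv).

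Three minor imprecisions, none of which is a gap. First, in the last step the interchange of integrals is Fubini rather than Tonelli, since $r_X$ may take negative values; the bound $|r_X|\leq\|X\|$ together with $\nu_g\bigl((0,1]\bigr)\leq g(1)<\infty$ (automatic for a finite non-decreasing $g$ on $[0,1]$) is exactly what legitimises it, so your stated justification is the right one under a slightly wrong name. Second, the supremum $\sup_{\alpha\in(0,1)}\bigl(\Psi_X(\alpha)-x(1-\alpha)\bigr)$ is attained at $\alpha_0=\inf\{u:r_X(u)>x\}$ only when $\alpha_0\in(0,1)$; in the edge cases $\alpha_0\in\{0,1\}$ it is merely approached in the limit (using boundedness of $r_X$ for $\alpha\downarrow 0$). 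This is harmless, because order preservation under $\sup$ and $\inf$ needs only the variational representations, not attainment, but the attainment claim as written is not literally true. Third, the decomposition $g(t)=g(0^+)+\nu_g\bigl((0,t]\bigr)$ holds at continuity points of $g$ only, hence a.e., which again suffices for the integral identity; it would be worth saying so explicitly.
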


  The following proposition is a generalisation of property (iv), when the function $g$ is a random function (depending on $\omega$ and $t$). It will be used in the sequel.
\begin{proposition} \label{prop_37_c}
 Let $c$ be a capacity. 
Let $ X, Y \in \chi(\F)$.
    The following statements are equivalent: 
 \begin{enumerate}
 \item[(i)] For all $g: [0,1] \rightarrow \R^+$ integrable and non-decreasing, we have:
   \[ \int_{0}^{1} g(t) r_X(t)dt  \leq \int_{0}^{1} g(t)r_Y(t)dt. \]
 \item[(ii)] 
 For all random functions $g: \Omega \times [0,1] \rightarrow \R^+$, which are integrable and non-decreasing (in the second argument), we have: for all $\omega \in \Omega$,
   \[ \int_{0}^{1} g(\omega, t) r_X(t)dt  \leq \int_{0}^{1} g(\omega, t)r_Y(t)dt. \]
     \end{enumerate}
\end{proposition}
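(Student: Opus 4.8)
The plan is to prove the two implications separately and to observe that the whole statement reduces, fibre by fibre, to the deterministic inequality in (i). The implication (ii) $\Rightarrow$ (i) is immediate: given any deterministic integrable non-decreasing $g:[0,1]\to\R^+$, define the random function $\tilde g(\omega,t):=g(t)$, which is trivially integrable and non-decreasing in its second argument; applying (ii) to $\tilde g$ and noting that $\tilde g(\omega,\cdot)=g$ for every $\omega$ yields exactly the inequality in (i).

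For the converse (i) $\Rightarrow$ (ii), first I would fix an arbitrary $\omega\in\Omega$ and consider the section $g_\omega:=g(\omega,\cdot):[0,1]\to\R^+$. By the standing hypothesis on $g$, the map $g_\omega$ is integrable and non-decreasing, so it is an admissible test function for (i). Applying (i) to $g_\omega$ then gives
\[
\int_0^1 g(\omega,t)\,r_X(t)\,dt=\int_0^1 g_\omega(t)\,r_X(t)\,dt\leq\int_0^1 g_\omega(t)\,r_Y(t)\,dt=\int_0^1 g(\omega,t)\,r_Y(t)\,dt.
\]
Since $\omega\in\Omega$ was arbitrary, this is precisely (ii).

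The argument carries no genuine obstacle, which is worth flagging explicitly: the inequality in (ii) is required to hold pointwise, separately for each $\omega$, rather than in any averaged or $\G$-measurable sense. Consequently no joint measurability of $(\omega,t)\mapsto g(\omega,t)$ and no Fubini-type interchange is needed --- the only structural fact used is that every $\omega$-section of a random function that is non-decreasing in the second variable is itself a deterministic non-decreasing function, so that statement (i) (equivalently, Proposition~\ref{Charc_2orderings_c}(iv)) applies verbatim to each section. The sole points to double-check are the trivial ones: that $g_\omega$ inherits integrability from the assumption on $g$ and that the non-negativity $g\geq 0$ is preserved on each section, both of which are built into the definition of the random test functions.
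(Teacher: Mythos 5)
Your proof is correct and follows exactly the same route as the paper's: (ii) $\Rightarrow$ (i) by viewing a deterministic $g$ as a constant-in-$\omega$ random function, and (i) $\Rightarrow$ (ii) by fixing $\omega$ and applying (i) to the section $t \mapsto g(\omega,t)$. Your additional remark that no joint measurability or Fubini argument is needed, since the inequality in (ii) is pointwise in $\omega$, is a sound observation consistent with the paper's argument.
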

\begin{proof}  
Let $\omega \in \Omega$ be fixed. The implication $(ii) \Rightarrow (i)$ is trivially obtained by taking $g(\omega,t) := g(t)$, which is non-negative, non-decreasing and integrable. %(in $t$).
\\ Let us now prove the converse implication. 
Let us fix $\omega$.
Since $\omega$ is fixed, the function $t \mapsto g(\omega,t)$ is non-negative, integrable, 
and non-decreasing.
Hence, by applying (i) with the function $t \mapsto g(\omega,t)$ (for a fixed $\omega \in \Omega$) we get:   \[ \int_{0}^{1} g(\omega, t) r_X(t)dt  \leq \int_{0}^{1} g(\omega, t)r_Y(t)dt. \]
As this can be done for all  $\omega \in \Omega$, the result holds. \qed
\end{proof}

\begin{definition}
    We say that a  $\G$-random distortion function $\phi^{\G}$ is \textbf{concave} if: for all $\omega \in \Omega$, the function $t  \mapsto \phi(\omega,t)$ is \textbf{concave}.
\end{definition}
%We will now focus on the randomly distorted Choquet integrals with respect to $\phi^{\G} \circ c$, where the random distortion function $\phi^{\G}$ is concave. 
Let $\phi^{\G}$ be a concave $\G$-random distortion function and let $c$ be a capacity. 
Let $X \in \chi(\F) $.
Let us consider $\rho $ defined by:
$\rho(X):=   \EE_{\phi^{\G} \circ c}(X)$.
Then, $\rho$ is a $\G$-conditional risk measure valued in $ \chi (\G)$ which satisfies all the properties of Proposition \ref{prop_42} 
(as $\phi^{\G}$ is a random distortion function).
Moreover, the following proposition holds.
\begin{proposition} \label{prop_26_3_c}
 Let $c$ be a capacity. 
   Let $\phi^{\G}$ be a \textbf{concave} $\G$-random distortion function.
Let us consider $\rho $ defined by:
$\rho(X):=   \EE_{\phi^{\G} \circ c}(X)$, for all $X \in \chi(\F) $.
Then, $\rho$ is monotone (or consistent) with respect to the $\preceq_{sl,c}$-stochastic dominance,   
that is, if $X, Y \in  \chi(\F) $ are such that $ X \preceq_{sl,c} Y $, then $\rho(X)(\omega) \leq \rho(Y)(\omega),$ for all $\omega\in\Omega$. 
\end{proposition}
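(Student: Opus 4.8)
The plan is to fix $\omega \in \Omega$ and rewrite $\EE_{\phi^{\G}\circ c}(X)(\omega)$ as an integral of the quantile function $r_X$ against a non-negative, non-decreasing weight manufactured from the concave map $t\mapsto\phi^{\G}(\omega,t)$, and then to invoke the characterisation of $\preceq_{sl,c}$ in terms of such weights (Proposition \ref{Charc_2orderings_c}(iv) and its random-function version, Proposition \ref{prop_37_c}). First I would reduce to the non-negative case. Since $X \preceq_{sl,c} Y$ holds if and only if $X+a \preceq_{sl,c} Y+a$ for every $a\in\R$ (the defining inequalities $\EE_c((X-b)^+)\le\EE_c((Y-b)^+)$ are merely reindexed by $b\mapsto b-a$), and since $\rho$ is translation invariant (Proposition \ref{prop_42}(v)), I may add the common constant $a:=\max(||X||,||Y||)$ to both $X$ and $Y$ and thereby assume $X,Y\ge 0$. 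For $X\ge 0$ one has $\EE_{\phi^{\G}\circ c}(X)(\omega)=\int_0^{+\infty}\phi^{\G}(\omega,c(X>x))\,dx$.

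Next I would establish the quantile representation. The delicate point is that a concave distortion may fail to be right-continuous at $0$, so I would first split off that jump: writing $\phi^{\G}(\omega,w)=\tilde\phi_\omega(w)+\phi^{\G}(\omega,0^+)\,\1_{\{w>0\}}$, where $\tilde\phi_\omega$ is concave, non-decreasing, right-continuous at $0$ with $\tilde\phi_\omega(0)=0$. The function $\tilde\phi_\omega$ is then absolutely continuous with a non-negative, non-increasing density $u\mapsto m(\omega,u)$ (this is where concavity enters), while the jump term contributes the essential-supremum term. Applying Fubini's theorem to $\tilde\phi_\omega(c(X>x))=\int_{(0,1]}\1_{\{u\le c(X>x)\}}\,m(\omega,u)\,du$, together with the identities $\int_0^{+\infty}\1_{\{c(X>x)\ge u\}}\,dx=r_X(1-u)$ (for a.e.\ $u$) and $\int_0^{+\infty}\1_{\{c(X>x)>0\}}\,dx=r_X^-(1)$, valid for $X\ge 0$, I expect to obtain
\[
\EE_{\phi^{\G}\circ c}(X)(\omega)=\int_0^1 g(\omega,t)\,r_X(t)\,dt+\phi^{\G}(\omega,0^+)\,r_X^-(1),
\]
where $g(\omega,t):=m(\omega,1-t)$ is non-negative, integrable, and \emph{non-decreasing} in $t$. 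The map $\omega\mapsto g(\omega,t)$ is $\G$-measurable, being a limit of difference quotients of the $\G$-measurable maps $\phi^{\G}(\cdot,u)$, so $g$ is an admissible random weight in the sense of Proposition \ref{prop_37_c}.

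Finally I would compare the two terms under the hypothesis $X\preceq_{sl,c}Y$. For the integral term, $g(\omega,\cdot)$ is non-negative, non-decreasing and integrable, so Proposition \ref{prop_37_c}(ii) gives $\int_0^1 g(\omega,t)r_X(t)\,dt\le\int_0^1 g(\omega,t)r_Y(t)\,dt$ for every $\omega\in\Omega$. For the remaining term it suffices to verify $r_X^-(1)\le r_Y^-(1)$, which I would deduce from Proposition \ref{Charc_2orderings_c}(iii): dividing $\int_\alpha^1 r_X(u)\,du\le\int_\alpha^1 r_Y(u)\,du$ by $1-\alpha$ and letting $\alpha\uparrow 1$, the monotone averages converge to $r_X^-(1)$ and $r_Y^-(1)$ respectively. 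Adding the two inequalities, and using $\phi^{\G}(\omega,0^+)\ge 0$, yields $\rho(X)(\omega)\le\rho(Y)(\omega)$ for all $\omega\in\Omega$. The hard part will be the measure-theoretic justification of the representation, in particular the atom $\phi^{\G}(\omega,0^+)>0$ produced by a jump of the concave distortion at $0$ (a worst-case, essential-supremum contribution): this is precisely the piece that lies outside the scope of Proposition \ref{prop_37_c} and must be handled separately as above, and one must also confirm the Fubini step and the joint $\G$-measurability of $g$ uniformly in $\omega$.
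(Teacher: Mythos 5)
Your proposal is correct and follows essentially the same route as the paper: the paper's Lemma \ref{lemma_35_c} is exactly your quantile representation (with the jump $\phi^{\G}(\omega,0^+)$ multiplying $\sup_{t<1}r_X^+(t)=r_X^-(1)$ and the right-hand derivative $\phi'(\omega,1-t)$ as the non-negative, non-decreasing weight), after which the paper likewise invokes Proposition \ref{prop_37_c} for the integral term and Proposition \ref{Charc_2orderings_c}(iii) for the remaining supremum term. The only cosmetic differences are that you reduce to non-negative functions at the level of the proposition (via translation invariance of $\rho$ and of $\preceq_{sl,c}$) whereas the paper does this inside the representation lemma, and that you obtain $r_X^-(1)\le r_Y^-(1)$ by a limiting-average argument where the paper argues by contradiction.
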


In order to prove Proposition \ref{prop_26_3_c}, we will introduce the following lemma. 
This lemma is well-known in the case of a capacity $c$ and a distortion function $\phi^{\G}$ which is deterministic (cf. \cite{grigorova2014risk}). 
In the case where $c=P$ is a probability measure and $\phi^{\G}$ is deterministic, this lemma is by now a standard result in (convex) risk measures.

\begin{lemma}\label{lemma_35_c}
    Let $c$ be a capacity. 
   Let $\phi^{\G}$ be a\textbf{ concave} $\G$-random distortion function. 
   For all $X \in \chi(\F)$, for all $\omega \in \Omega$, 
      \begin{equation}\label{eq146_c}
         \EE_{\phi^{\G} \circ c}(X)(\omega) = \phi^{\G}(\omega, 0+) \, \sup_{t<1} r^+_X(t) + \int_{0}^1 {\phi}^{'}(\omega, 1-t) r_X^{+}(t) dt
      \end{equation}
\end{lemma}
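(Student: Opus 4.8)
The plan is to fix $\omega \in \Omega$ throughout and work with the deterministic concave distortion function $\phi := \phi^{\G}(\omega,\cdot)$, since both sides of \eqref{eq146_c} are defined separately for each $\omega$ and the quantile functions $r_X^{\pm}$ do not depend on $\omega$. The identity then reduces, for each fixed $\omega$, to the known deterministic distortion-integral formula (cf. \cite{grigorova2014risk}); as $\omega$ is arbitrary, establishing it fibrewise suffices. I would give the direct argument, as follows.

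\textbf{Reduction to non-negative $X$.} First I would reduce to $X \geq 0$ using the translation invariance of $\EE_{\phi^{\G} \circ c}$ (Proposition \ref{prop_38}(iii)). For a constant $a \in \R$ one has $r_{X+a}^+ = r_X^+ + a$, so the right-hand side of \eqref{eq146_c} increases by $a\,[\phi(0+) + \int_0^1 \phi'(1-t)\,dt]$. Since $\int_0^1 \phi'(1-t)\,dt = \int_0^1 \phi'(s)\,ds = \phi(1) - \phi(0+) = 1 - \phi(0+)$ (using that $\phi$ is continuous at $1$), this coefficient equals $1$, so the right-hand side is translation invariant with the correct constant. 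Replacing $X$ by $X + \|X\|$, I may assume $X \geq 0$, in which case the negative-axis integral in Definition \ref{defn3} vanishes and $\EE_{\phi^{\G} \circ c}(X)(\omega) = \int_0^{+\infty} \phi(c(X>x))\,dx$.

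\textbf{Decomposition of $\phi$ and the two terms.} Since $\phi$ is concave with $\phi(0)=0$, its right derivative $\phi'$ is non-increasing and non-negative on $(0,1)$, and for every $s \in [0,1]$ one has $\phi(s) = \phi(0+)\,\1_{\{s>0\}} + \int_0^s \phi'(u)\,du$ (the integral converges as $\phi$ is bounded). Substituting $s = c(X>x)$ and invoking Tonelli (all integrands non-negative), I would split $\int_0^{+\infty}\phi(c(X>x))\,dx$ into $\phi(0+)\int_0^{+\infty}\1_{\{c(X>x)>0\}}\,dx$ plus $\int_0^1 \phi'(u)\big(\int_0^{+\infty}\1_{\{c(X>x)>u\}}\,dx\big)\,du$. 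As $x \mapsto c(X>x)$ is non-increasing, the first measure equals $\sup\{x: G_X(x)<1\}$, which one checks equals $\sup_{t<1} r_X^+(t)$. For the inner integral of the second term, $\int_0^{+\infty}\1_{\{G_X(x)<1-u\}}\,dx = r_X^-(1-u)$ directly from the definition $r_X^-(s)=\sup\{x: G_X(x)<s\}$ (with $X \geq 0$ ensuring this supremum is non-negative). The change of variable $t = 1-u$ turns the second term into $\int_0^1 \phi'(1-t)\,r_X^-(t)\,dt$.

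\textbf{Conclusion.} Finally I would replace $r_X^-$ by $r_X^+$ in the last integral: the two quantile functions coincide off an at most countable set, hence Lebesgue-a.e., so the integrals agree. This gives \eqref{eq146_c} for $X \geq 0$, and the reduction step transfers it to all $X \in \chi(\F)$; since $\omega$ was arbitrary, the identity holds for every $\omega$. The steps I expect to demand the most care are the identifications of the two measure quantities with $\sup_{t<1} r_X^+(t)$ and $r_X^-(1-u)$, together with the integrability of $\phi'$ near $0$ underlying the decomposition of $\phi$; the a.e. equality $r_X^- = r_X^+$ and the Tonelli interchange are routine.
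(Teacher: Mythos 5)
Your proof is correct and follows essentially the same route as the paper's: reduction to $X \geq 0$ via translation invariance of both sides, a Tonelli interchange, the fundamental theorem of calculus for the concave function $\phi^{\G}(\omega,\cdot)$, and the identification $\sup_{t<1} r_X^{+}(t)= \int_{0}^{\infty} \1_{\{G_X(s) <1 \}}\, ds$. The only organizational difference is that you run the computation from the Choquet integral toward the quantile representation, picking up $r_X^-$ and then invoking the a.e.\ equality $r_X^-=r_X^+$, whereas the paper starts from $\int_{0}^1 {\phi}^{'}(\omega, 1-t)\, r_X^{+}(t)\, dt$ and uses the layer-cake formula $r_X^{+}(t)= \int_{0}^{\infty} \1_{\{G_X(s)\leq t\}}\, ds$ directly, so that last step is not needed.
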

\begin{proof}%[Proof of Lemma]
The proof of the lemma follows the same arguments as Lemma 3.16 in \cite{grigorova2014risk} and is provided for the convenience of the reader.
    It suffices to prove \eqref{eq146_c} for non-negative measurable functions in $\chi(\F)$ as the terms on both sides of \eqref{eq146_c} are translation invariant. 
    Let $X \geq 0$. 
   Due to the nondecreasingness of $G_X$ and to the definition of $r_X^+$, we have: 
    \begin{equation}\label{eq147_c}
         r_X^{+}(t)= \int_{0}^{\infty} \1_{\{G_X(s)\leq t\}} ds. \end{equation}
    For $\omega$ fixed, let $\phi^{'}(\omega,\cdot)$ denote the right-hand derivative of the concave function $t \mapsto \phi(\omega,t)$ on $(0,1)$.
   Let $\omega \in \Omega$ be fixed, we obtain:
    \[\begin{split}
        &\int_{0}^1 {\phi}^{'}(\omega, 1-t) r_X^{+}(t) dt
        = \int_{0}^1 {\phi}^{'}(\omega, 1-t) \int_{0}^{\infty} \1_{\{G_X(s)\leq t\}} ds  dt\\
      &  = \int_{0}^{\infty} \int_{0}^1 {\phi}^{'}(\omega, 1-t)  \1_{\{G_X(s)\leq t\}} dt ds 
           = \int_{0}^{\infty} \int_{0}^1 {\phi}^{'}(\omega, z)  \1_{\{z \leq 1- G_X(s)\}} dz ds\\ 
           & = \int_{0}^{\infty} \int_{0}^{1- G_X(s)} {\phi}^{'}(\omega, z) dz ds \\
             &= \int_{0}^{\infty} \left( {\phi^{\G}}(\omega, 1- G_X(s)) - {\phi^{\G}}(\omega, 0^+) \right) \1_{\{ G_X(s) < 1\}}  ds. 
    \end{split}\]
    Hence, by the definition of the $\G$-randomly distorted Choquet integral and the fact that 
    \[
    \sup_{t<1} r_X^{+}(t)= \int_{0}^{\infty} \1_{\{G_X(s) <1 \}} ds, 
    \]
    we obtain: 
       \[\begin{split}
        &\int_{0}^1 {\phi}^{'}(\omega, 1-t) r_X^{+}(t) dt
        = \int_{0}^{\infty}  {\phi^{\G}}(\omega, 1- G_X(s))  ds - {\phi^{\G}}(\omega, 0^+) \int_{0}^{\infty} \1_{\{ G_X(s) < 1\}}  ds 
        \\ &=  \EE_{\phi^{\G} \circ c}(X)(\omega) - {\phi^{\G}}(\omega, 0^+) \sup_{t<1} r_X^+(t)
         =  \EE_{\phi^{\G} \circ c}(X)(\omega) - {\phi^{\G}}(\omega, 0^+) r_X^+(1).  \end{split} 
         \]
         \end{proof}\qed

\begin{proof}[Proof of Proposition \ref{prop_26_3_c}]
    Let $X, Y \in  \chi(\F)$ be such that $ X \preceq_{sl,c} Y $. 
    Let us show that $ \EE_{\phi^{\G} \circ c}(X) \leq  \EE_{\phi^{\G} \circ c}(Y)$. %, for all $\omega \in \Omega$.
    Let $\omega \in \Omega$ be fixed.
    Due to Lemma \ref{lemma_35_c}, it is enough to show that 
    \[
    \phi^{\G}(\omega, 0+) \, \sup_{t<1} r_X^+(t) + \int_{0}^1 {\phi}^{'}(\omega, 1-t) r_X^{+}(t) dt
    \leq
    \phi^{\G}(\omega, 0+) \, \sup_{t<1} r_Y^+(t) \int_{0}^1 {\phi}^{'}(\omega, 1-t) r_Y^{+}(t) dt.
    \]
  By using Proposition \ref{prop_37_c}, we have: 
      \[
   \int_{0}^1 {\phi}^{'}(\omega, 1-t) r_X^{+}(t) dt
    \leq
   \int_{0}^1 {\phi}^{'}(\omega, 1-t) r_Y^{+}(t) dt.
    \]
    As $ \phi^{\G}(\omega, 0+) \geq 0$, it remains to show that $ \sup_{t<1} r_X^+(t) \leq \sup_{t<1} r_Y^+(t)$. 
   By contradiction, assume that $ \sup_{t<1} r_X^+(t) > \sup_{t<1} r_Y^+(t)$. Then, there exists $t_0 \in [0,1)$ such that 
    \[
    r_X^+(s) \geq r_X^+(t_0) > \sup_{t<1} r_Y^+(t)  \,\,\,\, \text{for all} \,\, s \geq t_0.
    \]
    This implies that $ r_X^+(s) > r_Y^+(s)$, for all $s \geq t_0$.
    Hence, we get:    
    $\int_{t_0}^{1} r_X^+(s) - r_Y^+(s) \, ds >0$,
    which contradicts the fact that $X \preceq_{sl,c} Y$ (cf. Theorem \ref{Charc_2orderings_c} (iii)).
    Hence, $X \preceq_{sl,c} Y$  implies that $ \sup_{t<1} r_X^+(t) \leq \sup_{t<1} r_Y^+(t)$.\qed
\end{proof}

%\subsection{Representation Theorem}

\begin{theorem}[Representation with respect to a concave random distortion] %\label{prop_36}
Let $(\Omega, \F)$ be a measurable space. 
 Let $c$ be a continuous from below. 
 We assume Assumption \ref{assumption_Uniform_capacity}.
  Let $\rho:\chi(\F)  \rightarrow \chi(\G) $ be a conditional risk measure satisfying the following properties: 
 \begin{itemize}
   \item [(i)] Monotonicity (consistency) in the stop-loss stochastic dominance with respect to the capacity $c$: If $ X \preceq_{sl,c} Y $, then $  \rho \left( X \right) \leq  \rho \left( Y \right) $. 
    \item [(ii)] Comonotonic additivity: If $X$ and $Y$ are comonotonic, then $  \rho \left( X + Y \right) =  \rho \left( X \right) + \rho \left( Y \right) $.
  \item [(iii)] Translation invariance:  For $a \in \R$, we have: $  \rho(X+a) = \rho(X)+ a$.
       \item[(iv)] Positive homogeneity: For $a \geq 0$, we have: $  \rho(aX) = a \rho(X)$.
 \end{itemize}
Then, there exists a \textbf{concave} $\G$-random distortion function $\phi^{\G}$ such that 
\[
  \rho \left( X \right) 
   =  \EE_{\phi^{\G} \circ c}(X), \,\,\, \text{for all} \,\, X \in  \chi(\F).
\] 
\end{theorem}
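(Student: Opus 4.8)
The plan is to deduce this result from Theorem \ref{Theorem_14} and then to upgrade the distortion function obtained there to a concave one, using the stronger stop-loss monotonicity. First I would observe that hypothesis (i) here is \emph{stronger} than the first-order monotonicity required in Theorem \ref{Theorem_14}. Indeed, for $X,Y\in\chi(\F)$ the implications $X\preceq_{st,c}Y \Rightarrow X\preceq_{icx,c}Y \Rightarrow X\preceq_{sl,c}Y$ hold: the first because the class of non-decreasing convex test functions is contained in the class of all non-decreasing ones, and the second as recalled just after Definition \ref{def_stop_loss_domin_c}. Hence $X\preceq_{st,c}Y$ implies $X\preceq_{sl,c}Y$, so the present hypothesis (i) forces $\rho(X)\leq\rho(Y)$; that is, $\rho$ is monotone with respect to $\preceq_{st,c}$. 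Since hypotheses (ii)--(iv) coincide with those of Theorem \ref{Theorem_14}, that theorem applies and yields a $\G$-random distortion function $\phi^{\G}$ with $\rho(X)=\EE_{\phi^{\G}\circ c}(X)$ for all $X\in\chi(\F)$. I would recall from its proof (cf. Remark \ref{remark_42} and Lemma \ref{lemma_38}) that, with $U:=G_Z(Z)$, one has $\phi^{\G}(\omega,t)=\rho(\1_{\{U>1-t\}})(\omega)$ for every $t\in[0,1]$ and every $\omega\in\Omega$.

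It then remains to prove that, for every fixed $\omega\in\Omega$, the map $t\mapsto\phi^{\G}(\omega,t)$ is concave. I would fix $0\leq a\leq b\leq1$ and $\lambda\in[0,1]$, and set $m:=\lambda a+(1-\lambda)b\in[a,b]$. Using the uniform-type variable $U$, I introduce the nested events $A_a:=\{U>1-a\}\subseteq A_b:=\{U>1-b\}$ and $A_m:=\{U>1-m\}$, which satisfy $c(A_a)=a$, $c(A_b)=b$ and $c(A_m)=m$ by Lemma \ref{lemma_38}. I then consider the two non-negative functions $X:=\lambda\1_{A_a}+(1-\lambda)\1_{A_b}$ and $Y:=\1_{A_m}$.

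The key (and most technical) step is to verify that $X\preceq_{sl,c}Y$, which I would do via the tail-integral characterisation of Proposition \ref{Charc_2orderings_c}(ii). A direct computation of the survival functions gives $c(X>z)=a$ for $z\in[1-\lambda,1)$, $c(X>z)=b$ for $z\in[0,1-\lambda)$, and $c(Y>z)=m$ for $z\in[0,1)$ (both vanishing for $z\geq1$). Comparing $\int_x^{+\infty}c(Y>z)\,dz$ with $\int_x^{+\infty}c(X>z)\,dz$ on the regimes $x\in[1-\lambda,1]$ and $x\in[0,1-\lambda]$, and using $a\leq m\leq b$ together with the identity $m=\lambda a+(1-\lambda)b$, one checks that their difference equals $(m-a)(1-x)\geq0$ on the first regime and $\lambda(b-a)x\geq0$ on the second; the same non-negative value persists for $x<0$ and the inequality is trivial for $x\geq1$. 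Hence $\int_x^{+\infty}c(X>z)\,dz\leq\int_x^{+\infty}c(Y>z)\,dz$ for all $x\in\R$, i.e. $X\preceq_{sl,c}Y$.

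Finally I would apply the stop-loss monotonicity (i) to get $\rho(X)\leq\rho(Y)$. Since $\lambda\1_{A_a}$ and $(1-\lambda)\1_{A_b}$ are comonotonic (indicators of the nested sets $A_a\subseteq A_b$, scaled by non-negative constants), comonotonic additivity and positive homogeneity give $\rho(X)(\omega)=\lambda\rho(\1_{A_a})(\omega)+(1-\lambda)\rho(\1_{A_b})(\omega)=\lambda\phi^{\G}(\omega,a)+(1-\lambda)\phi^{\G}(\omega,b)$, while $\rho(Y)(\omega)=\phi^{\G}(\omega,m)$. Combining these, $\lambda\phi^{\G}(\omega,a)+(1-\lambda)\phi^{\G}(\omega,b)\leq\phi^{\G}(\omega,\lambda a+(1-\lambda)b)$ for all $\omega\in\Omega$, all $0\leq a\leq b\leq1$ and all $\lambda\in[0,1]$, which is precisely the concavity of $\phi^{\G}(\omega,\cdot)$. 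I expect the survival-function bookkeeping in the stop-loss verification to be the only genuine obstacle; the reduction to Theorem \ref{Theorem_14} and the extraction of the concavity inequality are then direct consequences of the already-established properties of $\rho$ and $\phi^{\G}$.
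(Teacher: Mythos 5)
Your proof is correct, and its overall architecture is the same as the paper's: reduce to Theorem \ref{Theorem_14} via the implication chain $\preceq_{st,c}\Rightarrow\preceq_{icx,c}\Rightarrow\preceq_{sl,c}$ (so stop-loss consistency implies first-order consistency), then extract concavity of the induced $\phi^{\G}$ by testing $\rho$ against a comonotonic combination of nested indicators versus a single indicator. The differences are in the execution of the concavity step, and they work in your favour. First, the paper only tests the midpoint case: it takes $X=\frac{1}{2}\1_A+\frac{1}{2}\1_B$ and $Y=\1_C$ with $c(C)=\frac{s+t}{2}$, obtaining $\frac{1}{2}\phi^{\G}(\omega,s)+\frac{1}{2}\phi^{\G}(\omega,t)\leq\phi^{\G}(\omega,\frac{s+t}{2})$, i.e.\ midpoint concavity, and then asserts that concavity follows; strictly speaking this passage needs the additional (standard but unstated) fact that a midpoint-concave function which is monotone, hence measurable and bounded on $[0,1]$, is concave. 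Your version proves the full inequality $\lambda\phi^{\G}(\omega,a)+(1-\lambda)\phi^{\G}(\omega,b)\leq\phi^{\G}(\omega,\lambda a+(1-\lambda)b)$ for arbitrary $\lambda\in[0,1]$ directly, so this subtlety never arises. Second, the paper delegates the key stochastic-dominance fact $X\preceq_{sl,c}Y$ to the proof of Theorem 3.7 in the cited reference, whereas you verify it self-containedly through the tail-integral characterisation of Proposition \ref{Charc_2orderings_c}(ii); your survival-function computation (difference $(m-a)(1-x)$ on $[1-\lambda,1]$, $\lambda(b-a)x$ on $[0,1-\lambda]$, constant equal to $0$ for $x<0$, trivial for $x\geq 1$) is correct, including the degenerate cases $\lambda\in\{0,1\}$. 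In short, the paper buys brevity by citation and by working at the midpoint; your argument is longer but fully self-contained and closes a small gap the paper leaves implicit.
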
 

\begin{proof}
   Since the consistency in the $\preceq_{sl,c}$ stochastic dominance %with respect to the capacity $c$ 
   implies the consistency in the $\preceq_{st,c}$ stochastic dominance (cf. \cite{grigorova2014risk}), %with respect to the capacity $c$, 
   we apply Theorem \ref{Theorem_14}. 
    By Theorem \ref{Theorem_14}, there exists a $\G$-random distortion function $\phi^{\G}$ such that 
$  \rho \left( X \right) 
   =  \EE_{\phi^{\G} \circ c}(X)
   \,\,\, \text{for all} \,\, X \in  \chi(\F).
$ 
It remains to show that the $\G$-random distortion function $\phi^{\G}$ is concave. 
By Assumption \ref{assumption_Uniform_capacity}, we have the following: for any $s,t \in [0,1]$, there exist $A,B,C \in \F$ such that $c(A)=s$, $c(B)=t$, $A \subset B$, and $c(C)=\frac{s+t}{2}$.
We set
 $X:=\frac{1}{2}\1_{A} + \frac{1}{2}\1_{B}$ and $Y:=\1_C$.
 By the proof of Theorem 3.7. in \cite{grigorova2014risk}, we have $X \preceq_{sl,c} Y$.
The consistency with respect to $\preceq_{sl,c}$ of $\rho$ implies that $\rho(X) \leq \rho(Y)$. Hence, for all $\omega \in \Omega$, we have:
$
 \EE_{\phi^{\G} \circ c}\left(\frac{1}{2}\1_{A} + \frac{1}{2}\1_{B}\right)(\omega)
 \leq
  \EE_{\phi^{\G} \circ c}\left(\1_C\right)(\omega).
$
As $\frac{1}{2}\1_{A}$ and $\frac{1}{2}\1_{B}$ are comonotonic (as $A \subset B$),
by the positive homogeneity and comonotonic additivity of $\EE_{\phi^{\G} \circ c}(\cdot)$, we get: 
\[
 \EE_{\phi^{\G} \circ c}\left(\frac{1}{2}\1_{A} + \frac{1}{2}\1_{B}\right)(\omega)
 =
\frac{1}{2} \EE_{\phi^{\G} \circ c}(\1_{A})(\omega) + \frac{1}{2} \EE_{\phi^{\G} \circ c}(\1_{B})(\omega)
 \leq
  \EE_{\phi^{\G} \circ c}(\1_C)(\omega).
\]
Hence, 
$
\frac{1}{2} \phi^{\G} (\omega, c(A)) + \frac{1}{2} \phi^{\G} (\omega, c(B))
 \leq
 \phi^{\G} (\omega, c(C)).
$
The concavity of $\phi^{\G}$ follows
as $c(A)=s$, $c(B)=t$, $c(C)=\frac{s+t}{2}$ and as $s$ and $t$ are arbitrary. \qed
\end{proof}

\section{Examples} \label{sec_Examples}

In this section, we will present some extensions of well-known distortion risk measures to our framework.
From a mathematical point of view, we will "randomise" the distortion function.

\subsection{A "randomised" Value at Risk}

In a probabilistic framework,
the static Value at Risk (VaR) at level $\alpha \in (0,1)$ of a given \textit{potential loss} $X \in \chi(\F)$ is defined by: 
$VaR_{\alpha}(X)=q^-_X(\alpha)$.  
The deterministic distortion function induced by $VaR_{\alpha}$ is of the following form $\psi(x)=\1_{(1-\alpha,1]}(x)$ for all $x \in [0,1]$.  
The same sign convention in the definition of the VaR as the one used in this paper is used, for instance, in 
\cite{embrechts2014academic}, 
\cite{denuit2006actuarial}, 
\cite{song2009risk} 
or \cite{dhaene2006risk}. 
The probabilistic VaR has been extended to a capacity framework in \cite{grigorova:tel-00878599}, under the name GVaR (for Generalized VaR). 
\paragraph{Example 1}
Let us consider the simple case where the $\sigma$-algebra $\G$ is $\G=\{\emptyset, \Omega, A, A^c\}$. Our framework allows to have two different levels for the VaR, depending on whether $\omega$ is in $A$ or in $A^c$, say $\alpha$ and $\beta$, where $\alpha < \beta$. 
We define a random distortion function $\phi^{\G}$ as follows: for all $t \in [0,1]$,
\[
\phi^{\G}(\omega,t):= \left\{ \begin{array}{cc}
    \1_{(1-\alpha,1]}(t),   &  \text{if}\,\, \omega \in A\\
   \1_{(1-\beta,1]}(t),   &  \text{if} \,\, \omega \in A^c.  
 \end{array} \right.
\]
Let $c$ be a given capacity (modelling ambiguity).
For the above choice of $\phi^{\G}$, we consider the randomly distorted Choquet integral 
 $\EE_{\phi^{\G} \circ c}(X)$, for  $X \in \chi$.
 We will compute $\EE_{\phi^{\G} \circ c}(X)$ in the simple case where $X$ is defined as $X:=\1_C$, with $C \in \F$.
\begin{itemize}
    \item[\textbf{a.}] The probabilistic framework \\
    In the particular case where $c$ is a probability measure, we set: $P(C)=p$, where $p \in [0,1]$.
We obtain: 
\begin{equation*}
\begin{aligned}
\EE_{\phi^{\G} \circ P} (X)(\omega)&= \EE_{\phi^{\G} \circ P}(\1_C)(\omega) = \phi^{\G}(\omega,P(C))  = \phi^{\G}(\omega,p)\\
&=\1_{(1-\alpha,1]}(p)\1_A(\omega)+
   \1_{(1-\beta,1]}(p)\1_{A^c}(\omega).  
 \end{aligned}
\end{equation*}
Hence, as $\alpha < \beta$, we have: 
\begin{enumerate}
    \item  If $p \leq 1- \beta$, then 
$ \EE_{\phi^{\G} \circ P} (X)(\omega)=0$, for all $\omega \in \Omega$.

    \item If $1-\beta < p \leq 1-\alpha$, then 
  $  \EE_{\phi^{\G} \circ P} (X)(\omega)=\1_{A^c}(\omega)$, for all $\omega \in \Omega$.

      \item If $p > 1-\alpha$, then $ \EE_{\phi^{\G} \circ P} (X)(\omega)=1$, for all $\omega \in \Omega$.
\end{enumerate}
 \textbf{An application:}
  Let us consider two experts who agree on the class of risk measures to be used, in this case $\{VaR_{\alpha}: \alpha \in (0,1)\}$ but who disagree on the acceptable level of risk within this class of risk measures, either $\alpha$ or $\beta$ (with $\alpha < \beta$).
  The two experts consider that they face a random loss of the form $X=\1_C$, and agree on the probability $P(C)=p$.
 By the definition of the VaR, 
if $p \leq 1- \beta$, then both experts attribute a risk of $0$ to $X$.
Also, if $p > 1-\alpha$, then both experts attribute a risk of $1$ to $X$.
But if $1-\beta < p \leq 1-\alpha$, then they disagree: the first expert attributes $0$ and the second expert attributes $1$.
\\
Using the randomly distorted Choquet integrals, with $\phi^{\G}$ as above, allows us to randomise the risk measure in the situation where the two experts disagree (that is, where $1-\beta < p \leq 1-\alpha$): we use the binary random variable $\1_{A^c}$ instead of a fixed number (when $1-\beta < p \leq 1-\alpha$).
In this interpretation, depending on how confident a decision maker is in the first expert or in the second expert, the decision maker can take a different set $A$.
In some sense, $A$ is the set where the decision maker considers that the first expert is "right" and $A^c$ is the set where the decision maker considers that the second expert is "right" (in the situation when the two experts disagree on the level of risk to be used). Thus,  $\G=\{\emptyset, \Omega, A, A^c\}$ can be interpreted as the information which the decision maker has on the experts.  

\begin{remark}
    Let us consider the particular case where $C=A^c$, that is, $X=\1_{A^c}$. Then, we have: \begin{enumerate}
    \item  If $p=P(A^c) \leq 1- \beta$, then 
$ \EE_{\phi^{\G} \circ P} (X)(\omega)=0$, for all $\omega \in \Omega$.
    \item If $1-\beta < p=P(A^c)  \leq 1-\alpha$, then 
  $  \EE_{\phi^{\G} \circ P} (X)(\omega)=\1_{A^c}(\omega)=X(\omega)$, for all $\omega \in \Omega$.
      \item If $p=P(A^c)  > 1-\alpha$, then $ \EE_{\phi^{\G} \circ P} (X)(\omega)=1$, for all $\omega \in \Omega$.
\end{enumerate}
\end{remark}
\item[\textbf{b.}] The capacity framework \\
Let $c$ be a capacity. We set $c(C)=\gamma$, where $\gamma \in [0,1]$ (as $c$ is normalised). 
We obtain: 
\begin{align*}
\EE_{\phi^{\G} \circ c} (X)(\omega)= \EE_{\phi^{\G} \circ c}(\1_C)(\omega) = \phi^{\G}(\omega,c(C))  = \phi^{\G}(\omega,\gamma)\\
=    \1_{(1-\alpha,1]}(\gamma)  \1_A(\omega)+
   \1_{(1-\beta,1]}(\gamma)\1_{A^c}(\omega).  
\end{align*}
Hence, as $\alpha < \beta$, we have: 
\begin{enumerate}
    \item  If $\gamma \leq 1- \beta$, then 
$ \EE_{\phi^{\G} \circ c} (X)(\omega)=0$, for all $\omega \in \Omega$.

    \item If $1-\beta < \gamma \leq 1-\alpha$, then 
  $  \EE_{\phi^{\G} \circ c} (X)(\omega)=\1_{A^c}(\omega)$, for all $\omega \in \Omega$.

      \item If $\gamma > 1-\alpha$, then $ \EE_{\phi^{\G} \circ c} (X)(\omega)=1$, for all $\omega \in \Omega$.
\end{enumerate}

\textbf{An application:}
This modelling framework is suitable for the case where there is an ambiguity on the distribution of the potential losses. 
Hence, random events are assessed by the capacity $c$ instead of a given probability measure $P$. 
  Let us consider two experts who agree on the class of risk measures, say $\{GVaR_{\alpha}: \alpha \in (0,1)\}$.
  These risk measures were introduced in \cite{grigorova:tel-00878599} and defined by $GVaR_{\alpha}(X):=r_X^-(\alpha)$, for $\alpha \in (0,1)$.
  However, the two experts disagree on the acceptable level (or acceptable parameter), either $\alpha$ or $\beta$ (with $\alpha < \beta$). 
  They face a potential loss of the simple form $X=\1_C$, with $c(C)=\gamma$.
 By the definition of the GVaR, we have $GVaR_{\alpha}(\1_C)   = \1_{(1- \alpha,1]}(c(C)) = \1_{(1- \alpha,1]}(\gamma) $ and  $GVaR_{\beta}(\1_C)= \1_{(1- \beta,1]}(c(C)) = \1_{(1- \beta,1]}(\gamma) $.
 Thus, if $\gamma \leq 1- \beta$, both analysts attribute  $0$ to $X$.
If $\gamma > 1-\alpha$, then both analysts attribute  $1$ to $X$.
But if $1-\beta < \gamma \leq 1-\alpha$, then they disagree: the first analyst attributes $0$ and the second analyst attributes $1$.
\\
Using the randomly distorted Choquet integral, with $\phi^{\G}$ as above, allows the decision maker to randomise the risk measure in the situation where the two analysts disagree (where $1-\beta < \gamma \leq 1-\alpha$): we use the indicator function $\1_{A^c}$ instead of a fixed number (when $1-\beta < \gamma \leq 1-\alpha$).
Depending on how "confident" the manager is in the opinion of the first analyst or of the second analyst, they can take a different $A$.

\item [\textbf{c.}] The case where $\G=\sigma(\{A_1, A_2, \cdots, A_n\})$ 
\\
Let us consider the case where the $\sigma$-algebra $\G$ is
$\G=\sigma(\{A_1, A_2, \cdots, A_n\})$, where the $A_i$'s form a partition of $\Omega$. 
Our framework allows  different levels, %for the VaR,
say $\alpha_1, \alpha_2, \cdots, \alpha_n$,  where $\alpha_1 < \alpha_2 < \cdots < \alpha_n$. 
We define a random distortion function $\phi^{\G}$ as follows: for all $t \in [0,1]$, for all  $i \in \{1, \cdots, n\}$, 
$\phi^{\G}(\omega,t)= \1_{(1-\alpha_{i},1]}(t)$, if $ \omega \in A_i$; that is, $\phi^{\G}(\omega,t)=  \sum_{i=1}^n  \1_{(1-\alpha_{i},1]}(t) \1_{A_i}(\omega)$. 

Let $c$ be a capacity.
For the above choice of $\phi^{\G}$, we consider the randomly distorted Choquet integral 
 $\EE_{\phi^{\G} \circ c}(X)$, for  $X \in \chi$. By our results, we know that this operator is comonotonic additive and monotone with respect to the first-order stochastic dominance (with respect to the capacity $c$).  
Let us compute $\EE_{\phi^{\G} \circ c}(X)$ when $X$ is defined by $X:=\1_C$, with $C \in \F$.
Let $c(C)=\gamma$, where $\gamma \in [0,1]$. 
We obtain: 
\begin{align*}
\EE_{\phi^{\G} \circ c} (X)(\omega)= \EE_{\phi^{\G} \circ c}(\1_C)(\omega) = \phi^{\G}(\omega,c(C))  = \phi^{\G}(\omega,\gamma)\\
= \sum_{i=1}^n  \1_{(1-\alpha_{i},1]}(\gamma) \1_{A_i}(\omega).
\end{align*}
As $\alpha_1 < \alpha_2 < \cdots < \alpha_n$, we have: 
\begin{itemize}
    \item  If $\gamma \leq 1- \alpha_{n}$, then 
$ \EE_{\phi^{\G} \circ c} (X)(\omega)=0$, for all $\omega \in \Omega$.

   \item For $k \in \{2,\cdots, n\}$, if $1-\alpha_{k} < \gamma \leq 1-\alpha_{k-1}$, then 
  $  \EE_{\phi^{\G} \circ c} (X)(\omega)= \sum_{i=k}^{n} \1_{A_i}(\omega)$  for all $\omega \in \Omega $.
  
      \item If $\gamma > 1-\alpha_{1}$, then $ \EE_{\phi^{\G} \circ c} (X)(\omega)=1$, for all $\omega \in \Omega$.
\end{itemize}
We thus get the following expression: for all $\omega \in \Omega$,
 \[  \EE_{\phi^{\G} \circ c} (X)(\omega)= \sum_{k=1}^{n} \1_{\{1-\alpha_{k}< \gamma \leq 1-\alpha_{k-1}\}}  \1_{\cup_{i=k}^{n} A_i}(\omega),  \,\,\,\, \alpha_{0}:= 0.\]
 \textbf{An application:}
 We are in the context of model ambiguity, captured by a capacity $c$. 
  We consider a team of risk analysts, here $n$ analysts, who agree on the capacity $c$, and on the class of risk measures to be used, say $\{GVaR_{\alpha}: \alpha \in (0,1)\}$. 
  But they all disagree on the parameter $\alpha$ with $\alpha_1 < \alpha_2 < \cdots < \alpha_n$, where $\alpha_i$ is the suggested parameter by analyst $i$.
 \\ 
 Using the randomly distorted Choquet integral, with $\phi^{\G}$ as above, allows us to "randomise" the risk measure on the sets where the analysts disagree. 
Depending on how "confident" a decision maker is in the $i^{th}$ analyst, they choose a different $A_i$, provided $(A_1, A_2, \cdots, A_n)$ form a partition of $\Omega$. 
In some sense, $A_i$ is the set where the decision maker considers that the $i^{th}$ analyst is "right" (when all analysts disagree on the level $\alpha$ to be used).
\end{itemize}
 
\subsection{A "randomised" Average Value at Risk} 

In the usual probabilistic framework,
the Average Value at Risk (AVaR) (also known as Tail Value at Risk (TVaR)) at level $\alpha \in (0,1)$ of a given \textit{potential loss} $X \in \chi(\F)$ is defined by: 
$AVaR_{\alpha}(X)=\frac{1}{1-\alpha}\int_{\alpha}^{1}q_X(t) dt $. 
The deterministic distortion function induced by $AVaR_{\alpha}$ is of the following form $\psi(x)=\frac{1}{1-\alpha}\min \left\{x,1-\alpha\right\}$ for all $x \in [0,1]$, and is a concave distortion function. 
The same sign convention in the definition of the AVaR as the one used in this paper is used, for instance, in 
\cite{embrechts2014academic}, 
\cite{denuit2006actuarial} 
or \cite{dhaene2006risk}.

\paragraph{Example 2}
Let $\G=\{\emptyset, \Omega, A, A^c\}$. 
 Let $\alpha$ and $\beta$ be two different levels for the AVaR such that $\alpha < \beta$. 
We define a random distortion function $\phi^{\G}$ as follows: for all $t \in [0,1]$,
\[
\phi^{\G}(\omega,t):= \left\{ \begin{array}{cc}
 \frac{1}{1-\alpha}\min \left\{t,1-\alpha\right\}, &  \text{if}\,\, \omega \in A\\
 \frac{1}{1-\beta}\min \left\{t,1-\beta\right\},  &  \text{if}\,\, \omega \in A^c.  
 \end{array} \right.
\]
Let $c$ be a capacity.
For the above choice of $\phi^{\G}$, we consider the randomly distorted Choquet integral 
 $\EE_{\phi^{\G} \circ c}(X)$, for  $X \in \chi$.
 With this $\phi^{\G}$, let us compute $\EE_{\phi^{\G} \circ c}(X)$ when $X$ is defined by $X:=\1_C$, with $C \in \F$.
\begin{itemize}
\item[\textbf{a.}] The probabilistic framework \\
 In the particular case where $c=P$ is a probability measure, we set: $P(C)=p$, where $p \in [0,1]$. 
We obtain: 
\[ \begin{split}
 \EE_{\phi^{\G} \circ P} (X)(\omega)&= \EE_{\phi^{\G} \circ P}(\1_C)(\omega) = \phi^{\G}(\omega,P(C))  = \phi^{\G}(\omega,p)
\\&= 
 \frac{1}{1-\alpha}\min \left\{p,1-\alpha\right\} \1_A(\omega) + 
 \frac{1}{1-\beta}\min \left\{p,1-\beta\right\} \1_{A^c}(\omega).
\end{split}\]
Based on the value of $p$ (with respect to $\alpha$ and $\beta$), we have the following three cases: 
\begin{enumerate}
    \item If $p \leq 1- \beta$, then $ \EE_{\phi^{\G} \circ P} (X)(\omega)
= \frac{p}{1-\alpha} \1_A(\omega) + \frac{p}{1-\beta} \1_{A^c}(\omega)$.  

    \item If $1-\beta < p \leq 1-\alpha$, then $ \EE_{\phi^{\G} \circ P} (X)(\omega)
= \frac{p}{1-\alpha}\1_{A}(\omega) +  \1_{A^c}(\omega) $.  

      \item If $p > 1-\alpha$, then $ \EE_{\phi^{\G} \circ P} (X)=1$, for all $\omega \in \Omega$.
\end{enumerate}

 \textbf{An application:}
The application is similar to that of Example 1 (a), 
where now the class of risk measures on which the two experts agree is the class $\{AVaR_{\alpha}: \alpha \in (0,1)\}$,
 while they disagree on the parameter to be used, with $\alpha < \beta$.
By the definition of the AVaR (in the probabilistic framework), 
when $p > 1-\alpha$,  both experts attribute a risk of $1$ to $X$.
If $1-\beta < p \leq 1-\alpha$, then they disagree: the first expert attributes $\frac{p}{1-\alpha}$ and the second expert attributes $1$. 
If $p \leq 1- \beta$, then they also disagree: the first expert attributes $\frac{p}{1-\alpha}$ but the second expert attributes $\frac{p}{1-\beta}$.
Using the randomly distorted Choquet integral, with $\phi^{\G}$ as above, allows us to "randomise" the risk measure on the sets where the two experts disagree. 

\item[\textbf{b.}] The capacity framework \\
 Let $c$ be a capacity. We set $c(C)=\gamma$, where $\gamma \in [0,1]$. 
We obtain: 
\[ \begin{split}
 \EE_{\phi^{\G} \circ c} (X)(\omega&)= \EE_{\phi^{\G} \circ P}(\1_C)(\omega) = \phi^{\G}(\omega,c(C))  = \phi^{\G}(\omega,\gamma)
\\&= 
 \frac{1}{1-\alpha}\min \left\{\gamma,1-\alpha\right\} \1_A(\omega) +
 \frac{1}{1-\beta}\min \left\{\gamma,1-\beta\right\} \1_{A^c}(\omega).
\end{split}\]
%Based on the value of $\gamma$, we have the following three cases: 
%\begin{enumerate}
 %   \item If $\gamma \leq 1- \beta$, then $ \EE_{\phi^{\G} \circ c} (X)(\omega)
%= \frac{\gamma}{1-\alpha} \1_{A}(\omega) + \frac{\gamma}{1-\beta} \1_{A^c}(\omega)$. 

 %   \item If $1-\beta < \gamma \leq 1-\alpha$, then 
 %   $\EE_{\phi^{\G} \circ c} (X)(\omega) = \frac{\gamma}{1-\alpha} \1_{A}(\omega) +  \1_{A^c}(\omega)$.

  %    \item If $\gamma > 1-\alpha$, then $ \EE_{\phi^{\G} \circ c} (X)=1$, for all $\omega \in \Omega$.
%\end{enumerate}
 \textbf{An application:}
The application is similar to that of Example 1 (b), 
where now the class of risk measures is $\{GAVaR_{\alpha}: \alpha \in (0,1)\}$.
  
\item [\textbf{c.}] The case where $\G=\sigma(\{A_1, A_2, \cdots, A_n\})$ 
\\
Let us consider the case where $\G=\sigma(\{A_1, A_2, \cdots, A_n\})$, where the $A_i$'s form a partition of $\Omega$. 
Let $c$ be a capacity.
Our framework allows to have different levels,
say $\alpha_1, \alpha_2, \cdots, \alpha_n$  where $\alpha_1 < \alpha_2 < \cdots < \alpha_n$. 
We define a random distortion function $\phi^{\G}$ as follows: for all $t \in [0,1]$, for all $\omega \in \Omega$, 
$\phi^{\G}(\omega,t):= \sum_{i=1}^n   \frac{1}{1-\alpha_i}\min \left\{t,1-\alpha_i\right\} \1_{A_i}(\omega)$. By our results, we know that  $\EE_{\phi^{\G} \circ c}(\cdot)$ is  monotone with respect to the stop-loss ordering (with respect to the capacity $c$), as for all $\omega\in\Omega$, $t\mapsto \phi^{\G}(\omega,t)$ is concave.    
Let us compute $\EE_{\phi^{\G} \circ c}(X)$ when $X$ is defined by $X:=\1_C$, with $C \in \F$.
 We set $c(C)=\gamma$, where $\gamma \in [0,1]$.  
We obtain: 
\begin{align*}
\EE_{\phi^{\G} \circ c}(\1_C)(\omega) = \phi^{\G}(\omega,c(C))  = \phi^{\G}(\omega,\gamma) = \sum_{i=1}^n  \frac{1}{1-\alpha_i}\min \left\{\gamma,1-\alpha_i\right\} \1_{A_i}(\omega).
\end{align*}
  \textbf{An application:}
  We have an interpretation which is analogous to that of Example 1 (c).
\end{itemize}

\subsection{A "randomised mixture" of VaR and AVaR}
For illustrative purposes,
let us now return to the simple case where $\G=\{\emptyset, \Omega, A, A^c\}$. Let us place ourselves in the capacity framework, where $c$ is a given capacity. 
 Our framework allows also  to have distortion functions belonging to two different classes (for instance, one induced by the GVaR and the other induced by the GAVaR), depending on whether $\omega$ is in $A$ or in $A^c$. In this case, the two experts disagree also on the class of risk measures to be used (and agree only on the fact that they face a random loss $X$).  
We define a random distortion function $\phi^{\G}$ as follows: for all $t \in [0,1]$,
\[
\phi^{\G}(\omega,t):= \left\{ \begin{array}{cc}
    \1_{(1-\alpha,1]}(t),   &  \text{if}\,\, \omega \in A\\
  \frac{1}{1-\alpha}\min \left\{t,1-\alpha\right\},    &  \text{if}\,\, \omega \in A^c. 
 \end{array} \right.
\]
Let us compute $\EE_{\phi^{\G} \circ c}(X)$ when $X$ is defined by $X:=\1_C$, with $C \in \F$.
We set $c(C)=\gamma$, where $\gamma \in [0,1]$.  
We have: 
\[
\EE_{\phi^{\G} \circ c}(\1_C)(\omega) = \phi^{\G}(\omega,c(C))  = \phi^{\G}(\omega,\gamma)
=\left\{ \begin{array}{cc}
    \1_{(1-\alpha,1]}(\gamma),   &  \text{if}\,\, \omega \in A\\
 \frac{1}{1-\alpha}\min \left\{\gamma,1-\alpha\right\},     &  \text{if}\,\, \omega \in A^c.
 \end{array} \right.
\]

\appendix

 \section{Appendix A} \label{sec_Appendix}

  \begin{proof}[Proof of Lemma \ref{charc_1_c}]
  Let us first prove statement $(i)$. 
We fix $x \in \mathbb{R}$ and we remark that $G_X (x) = 1 - \mathbb{E}_c \left( u\left( X \right ) \right )$ where $ u(z) :=  \mathbbm{1}_{\left( x , + \infty \right)} \left(z \right )$ which proves the desired implication as the function
$u$ is a non-decreasing.
We next prove statements $(ii)$ and $(iii)$. For $t \in (0,1)$, $(ii)$ is a direct consequence of statement $(i)$ and of the definition of $r_X^+$ and $r_Y^+$.
For $t \in (0,1)$, $(iii)$ is a consequence of statement $(i)$ and of Remark \ref{remark_44}.
\\ It is now easy to extend to $t=0$ and $t=1$. 
For $t=0$, we have $ r_{X}^-(0):= r_{X}^+(0) =\inf_{t>0} r_{X}^+(t) \leq \inf_{t>0} r^+_Y (t)=r^+_Y(0):=r^-_Y(0)$ (by using the property of the upper quantile function on $(0,1)$).
    For $t=1$, we have $ r_{X}^+(1)  := r^-_X(1) = \sup_{t < 1}r_{X}^-(t) \leq \sup_{t < 1} r^-_Y (t)=r^-_Y(1)=: r_{Y}^+(1) $, where we have used the property of the lower quantile on $(0,1)$.\qed
  \end{proof}
  
\begin{proof}[Proof of Lemma \ref{lemma_30}]
For the sake of simplicity and clarity, we consider the case when $n=2$.
In the following, we show the "only if" direction and then the "if" direction.
     \\ "$\Leftarrow$" Let $X$ and $Y$ in $\chi(\F)$ be defined as follows: 
        $X=  x_1\1_{A_1}+x_2\1_{A_2}$  and $Y= y_1\1_{A_1}+y_2\1_{A_2}$
    where 
    $x_1 \geq x_2\geq 0$,  $y_1 \geq y_2 \geq 0$,
     $A_1 \cap A_2=\emptyset$ and $ A_1 \cup A_2=\Omega$.  
   Then, $X$ and $Y$ are comonotonic. Indeed, let us consider all possible cases: 
   \begin{enumerate}
       \item If $\omega, \Bar{\omega} \in A_1$, then 
      $\left( X(\omega)-X(\Bar{\omega})\right)
       \left( Y(\omega)-Y(\Bar{\omega})\right)
       =
  \left( x_1- x_1 \right)  \left( y_1 - y_1\right)
   = 0$. 
    
        \item If $\omega \in A_1$ and  $\Bar{\omega} \in A_2$, then   
     $ \left( X(\omega)-X(\Bar{\omega})\right)
       \left( Y(\omega)-Y(\Bar{\omega})\right)
       = 
  \left( x_1 - x_2 \right)  \left( y_1 - y_2\right)
  \geq 0$. 
    
          \item If $\omega \in A_2$ and  $\Bar{\omega} \in A_1$, then 
     $ \left( X(\omega)-X(\Bar{\omega})\right)
       \left( Y(\omega)-Y(\Bar{\omega})\right)
  = \left( x_2- x_1 \right)  \left( y_2 - y_1\right)
    \geq 0$. 
         \item If $\omega, \Bar{\omega} \in A_2$, then 
      $\left( X(\omega)-X(\Bar{\omega})\right)
       \left( Y(\omega)-Y(\Bar{\omega})\right)
       =
  \left( x_2- x_2 \right)  \left( y_2 - y_2\right)
    = 0$. 

   \end{enumerate}
   Thus, in all possible cases,   $\left( X(\omega)-X(\Bar{\omega})\right)
       \left( Y(\omega)-Y(\Bar{\omega})\right)\geq 0;$ hence, the comonotonicity of $X$ and $Y$ is established. \\
        "$\Rightarrow$" Let $X$ and $Y$ be two comonotonic step functions of the form: 
        \[
        X=  b_1\1_{B_1}+b_2\1_{B_2} \,\, \,\, and \,\,\,\, Y= a_1\1_{C_1}+a_2\1_{C_2}
        \]
        where 
    $ B_1 \cap B_2= C_1 \cap C_2=\emptyset$  and  $B_1 \cup B_2= C_1 \cup C_2=\Omega$. 
    \\ Without loss of generality, we assume that $  b_1 \geq b_2\geq 0 $ and $a_1 \geq a_2 \geq 0$. We have three cases: 
     \begin{enumerate}
         \item [Case 1:] If $b_1=b_2=b$, then $X=b$. We can represent $X$ as $X=b\1_{C_1}+b\1_{C_2}$, which is the desired representation. 
         \item  [Case 2:] Similarly, if $a_1=a_2=a$, then $Y=a$, and $Y$ can be written as $Y=a\1_{B_1}+a\1_{B_2}$. Hence, the result holds. 
         \item [Case 3:] If $  b_1 > b_2\geq 0 $ and $a_1 > a_2 \geq 0$, 
     then we define the following sets:
     \[
    E_{11} =  B_1 \cap C_1,  \,\,\,\,
    E_{12} =  B_1 \cap C_2,  \,\,\,\, 
    E_{21} =  B_2 \cap C_1,  \,\,\,\, 
    E_{22} =  B_2 \cap C_2.
     \]
     Note that these sets are disjoint and form a partition of $\Omega$ (also, some of them might be empty). 
     We represent  $X$ and $Y$ using these sets: 
    \[
     X=  b_1\1_{E_{11}}+b_1\1_{E_{12}}+ b_2\1_{E_{21}}+ b_2\1_{E_{22}}
     \,\, \,\, and \,\,\,\, 
     Y=  a_1\1_{E_{11}}+a_2\1_{E_{12}}+ a_1\1_{E_{21}}+ a_2\1_{E_{22}}.
    \] 
    Since $X$ and $Y$ are comonotonic, we deduce that at least one of $E_{12}$ and $E_{21}$ is empty   
    \footnote{Indeed, if $\omega \in E_{12}$ and  $\Bar{\omega} \in E_{21}$, then $  \left( X(\omega)-X(\Bar{\omega})\right)
       \left( Y(\omega)-Y(\Bar{\omega})\right)
       =
  \left( b_1- b_2 \right)  \left( a_2 - a_1\right)
    < 0$, which contradicts the comonotonicity of $X$ and $Y$.}. 
  Therefore, we can represent $X$ and $Y$ as follows: 
  \[
     X=  x_1\1_{E_{1}}+x_2\1_{E_{2}} + x_3\1_{E_{3}}
     \,\, \,\, and \,\,\,\, 
     Y=  y_1\1_{E_{1}}+y_2\1_{E_{2}}+ y_3\1_{E_{3}}.
  \]
  where $E_1$, $E_2$ and $E_3$ are disjoint, form a partition of $\Omega$, and are defined as: $E_1:=E_{11}$, $E_3:=E_{22}$, 
  and $E_2$ is either $E_{12}$ (if $E_{12}$ is non-empty) or $E_{21}$ (if $E_{21}$ is non-empty),
  and where $x_1:= b_1, $ $y_1:= a_1,$ 
  \[
  \,\, x_2:=\left\{ \begin{array}{cc}
    b_1  &  \text{if}\, E_2=E_{12}\\
    b_2  &  \text{if} \, E_2=E_{21},
 \end{array} \right.
  \,\, x_3:= b_2, \,\,\,\, and \,\,\,\,\,\,
    \,\, y_2:= \left\{ \begin{array}{cc}
    a_2  &  \text{if}\, E_2=E_{12}\\
    a_1  &  \text{if} \, E_2=E_{21},
 \end{array} \right. \,\, y_3:= a_2. 
  \]
   \end{enumerate}\qed
\end{proof}

\section*{Declarations} 
 The authors declare that they do not have any known conflicts of interest.

\end{document}